\newcounter{ENUM}
\def\<{\langle}
\def\>{\rangle}
\def\0{{{\bf 0}}}
\def\OO{{\mathcal O}}
\def\FA{{\mathfrak A}}
\def\FF{{\mathbb F}}
\def\QQ{{\mathbb Q}}
\def\pp{{\mathfrak p}}
\def\tV{{\tilde V}}
\def\tpi{{\tilde \pi}}
\def\tchi{{\tilde \chi}}
\def\ttheta{{\tilde \theta}}
\def\tpsi{{\tilde \psi}}
\def\tphi{{\tilde \phi}}
\def\tPsi{{\tilde \Psi}}
\def\tLambda{{\tilde \Lambda}}
\def\txi{{\tilde \xi}}
\def\tomega{{\tilde \omega}}
\def\tDelta{{\tilde \Delta}}
\def\trho{{\tilde \rho}}
\newcommand{\Ind}{\operatorname{Ind}}
\newcommand{\cInd}{\operatorname{c-Ind}}
\newcommand{\Res}{\operatorname{Res}}
\newcommand{\Ad}{\operatorname{Ad}}
\def\q{{\mathfrak q}}
\def\Tor{\operatorname{Tor}}
\def\Hom{\operatorname{Hom}}
\def\Fr{\operatorname{Fr}}
\def\End{\operatorname{End}}
\def\Gal{\operatorname{Gal}}
\def\GL{\operatorname{GL}}
\def\tr{\operatorname{tr}}
\def\Spec{\operatorname{Spec}}
\def\Res{\operatorname{Res}}
\def\univ{\operatorname{univ}}
\def\ord{\operatorname{ord}}
\newcommand{\Id}{\operatorname{Id}}
\newcommand{\margh}[1]{}
\newtheorem{thm}{Theorem}[section]
\newtheorem{prop}[thm]{Proposition}
\newtheorem{lemma}[thm]{Lemma}
\newtheorem{cor}[thm]{Corollary}
\theoremstyle{definition}
\newtheorem{defn}[thm]{Definition}
\newtheorem{rem}[thm]{Remark}
\numberwithin{equation}{section}
\begin{document}
\title{On $l$-adic families of cuspidal representations of $\GL_2(\QQ_p)$}
\author{David Helm}
\subjclass[2000]{11S37 (Primary), 11F33, 11F70, 22E50 (Secondary)}

\maketitle

\begin{abstract}
We compute the universal deformations of cuspidal representations $\pi$
of $\GL_2(F)$ over $\overline{\FF}_l$, where $F$ is a local field of
residue characteristic $p$ and $l$ is an odd prime different from $p$.
When $\pi$ is supercuspidal there is an irreducible,
two dimensional representation $\rho$ of $G_F$ that corresponds to
$\pi$ via the mod $l$ local Langlands correspondence of~\cite{viglanglands}; we show that there is
a natural isomorphism between the universal deformation rings of
$\rho$ and $\pi$ that induces the usual (suitably normalized) local Langlands
correspondence on characteristic zero points.  Our work establishes certain
cases of a conjecture of Emerton~\cite{emerton}.
\end{abstract}

\section{Introduction}

A standard technique in the theory of automorphic forms is to study their
variation in algebraic families, and to compare such families of forms to
families of Galois representations.  On the other hand, many questions about
automorphic forms are much more naturally phrased in the language of
automorphic representations.  It is thus natural to try to study the variation
of automorphic representations in families.  Unfortunately, at present there is 
no good notion of automorphic representations over 
rings other than fields of characteristic zero, and hence no good notion of
a family of automorphic representations.

If one instead works in a local setting, say with admissible representations of reductive
groups over a local field $F$, the situation is much better.  Indeed, 
Vign{\'e}ras \cite{vigbook} has extensively studied the representations of such groups
over general coefficient rings.  Moreover, in the case of $\GL_n$ she obtains a
mod $l$ local Langlands correspondence for $GL_n(F)$~\cite{viglanglands}, when the 
characteristic of $F$
is different from $l$, that is compatible in a certain sense with ``reduction mod $l$''
from the classical (characteristic zero) local Langlands correspondence.

Vign{\'e}ras' results provide a framework in which one can study the deformation
theory of mod $l$ admissible representations of $\GL_n(F)$.  It is then natural
to ask if one can extend the mod $l$ local Langlands correspondence to deformations;
that is: if $\pi$ is the admissible representation associated to a representation
$\rho$ of $W_F$ by mod $l$ local Langlands, then is there a natural bijection between
deformations of $\rho$ and those of $\pi$?  Such a bijection would amount to a ``local
Langlands correspondence in families.''

We address this question in the case when
$\pi$ is a supercuspidal representation of $\GL_2(F)$ over $\overline{\FF}_l$
(so $\rho: W_F \rightarrow \GL_2(\overline{\FF}_l)$ is irreducible.)  In this case
$\pi$ admits a universal deformation, which we compute via the theory of types.  Using
the explicit description of local Langlands due to Bushnell-Henniart~\cite{BH}, we
show that there is a natural isomorphism between the universal deformation ring of
$\pi$ and the universal deformation ring of $\rho$ (see Theorem \ref{thm:supercuspidal}
for the precise statement.)  This gives a bijection between deformations of $\rho$
over a complete Noetherean local $W(\overline{\FF}_l)$-algebra $A$ and
deformations of $\pi$ over $A$, extending the ordinary local Langlands correspondence.

The question of constructing a local Langlands correspondence of $\GL_2$ in algebraic
families has also been considered in~\cite{emerton}.  Emerton considers families of
two-dimensional representations $\rho$ of $G_F$ over reduced complete $l$-torsion free
Noetherean local $W(k)$-algebras $A$, and shows that for such a representation
there is at most one admissible $A[\GL_2(F)]$-module $\pi$ such that:
\begin{itemize}
\item $\pi$ is $A$-torsion free,
\item at generic points $\eta$ of $\Spec A$, the smooth dual $\pi_{\eta}^{\vee}$ of
$\pi_{\eta}$ corresponds to $\rho_{\eta}$ under a certain ``modified local Langlands 
correspondence'', and
\item at the closed point $x$ of $\Spec A$, $\pi_x$ is a quotient of the 
$\pi(\rho_x)^{\vee}$, where $\pi(\rho_x)$ is the representation attached to $\rho_x$
by this modified local Langlands correspondence.
\end{itemize}
(We refer the reader to Theorems 1.2 and 1.3 of \cite{emerton} for details.)
Emerton conjectures further that such a $\pi$ always exists; our result establishes
this in the case when $\rho_x$ is absolutely irreducible.  (More precisely,
in this setting $\rho$ is a deformation of $\rho_x$; Theorem~\ref{thm:supercuspidal}
gives a corresponding deformation of $\pi(\rho_x)$, and the representation $\pi$ 
conjectured by Emerton is the smooth dual of this deformation.)

The structure of the paper is as follows: we begin by studying the deformation
theory of a representation of a finite group over a finite field $k$ of 
characteristic $l$, and show that a very naive approach
to constructing the universal deformation (that proceeds essentially by concatenating
the characteristic zero lifts of this representation) actually produces a deformation
that ``agrees with the universal deformation up to $l$-torsion.''  We give a
criterion in terms of character theory for this naive deformation to be universal.
The next section applies this theory for mod $l$ representations of $\GL_2(\FF_q)$,
where $l$ is odd and prime to $q$.

In section~\ref{sec:cuspidal} we turn to the deformation theory of cuspidal
representations $\pi$ of $\GL_2(F)$ over $\overline{\FF}_l$, where $F$ is a local field 
with residue field $\FF_q$.  This proceeds by the theory of types; we show that
deforming such a representation is equivalent to deforming its type.  We then use this
to attach to a supercuspidal $\pi$ a character $\chi$ of either $F^{\times}$ or $E^{\times}$, where $E$ 
is a quadratic extension of $F$, in such a way that deformations of $\pi$ are naturally
in bijection with deformations of $\chi$.  (See Theorem~\ref{thm:admissible} for details.)  
We also compute the universal deformations of representations of $\pi$ that are cuspidal
but not supercuspidal; although this is not necessary for our main results it is of
a piece conceptually with the ideas described here, and has applications to Emerton's
conjecture that we will address in a future paper.

The deformation theory of two-dimensional irreducible representations of $G_F$ is
well-understood, and we summarize the necessary facts in section~\ref{sec:galois}.
Combining this with the explicit local Langlands for $\GL_2$ of \cite{BH} establishes
Theorem \ref{thm:supercuspidal}.

We would like to thank Richard Taylor and Matthew Emerton for many valuable discussions,
and Jared Weinstein for his comments on an earlier version of this paper.

\section{Deformation Theory} \label{sec:def}
We begin with some general preliminaries on the deformation theory of representations
of a finite group $G$.  Let $k$ be a finite field of characteristic $l$ and let $\pi$ be
an absolutely irreducible $k$-representation of $G$.  Attached to $\pi$ we have
the universal deformation ring $R_{\pi}^{\univ}$ of $\pi$; it parameterizes lifts
of $\pi$ to representations over finite length local $W(k)$-algebras with residue field $k$.
The ring $R_{\pi}^{\univ}$ is a complete Noetherian local $W(k)$-algebra, with residue field $k$,
and comes equipped with is a universal representation $\pi^{\univ}$ of $G$ over 
$R_{\pi}^{\univ}$, and an isomorphism:
$$\iota: \pi^{\univ} \otimes_{R_{\pi}^{\univ}} k \cong \pi.$$
We refer the reader to~\cite{mazur} for more details.

Under certain hypotheses that we explain further below, it is possible to compute the ring 
$R_{\pi}^{\univ}$ directly from the character table of $G$. We begin by constructing
a particular deformation of $\pi$:

Let $K$ be an algebraic closure of the field of fractions of $W(k)$, and let $\OO$ be
the ring of integers in $K$.  We say a $K$-reprsentation $\tpi$ of $G$ lifts $\pi$ if
there is a $G$-stable $\OO$-lattice $\tpi_{\OO}$ in $\tpi$ such that
$\tpi_{\OO} \otimes_{\OO} \overline{k}$ is isomorphic to $\pi \otimes_k \overline{k}$.
One sees easily that the set $S$ of isomorphism classes of such $\tpi$ is in bijection 
with the set of $K$-points of $\Spec R_{\pi}^{\univ}$.  

For $\tpi$ in $S$, let $K_{\tpi}$ be the minimal field of definition of $\tpi$.  Define:
$$R_K := \prod_{\tpi \in S} K_{\tpi}$$
$$\pi_K := \prod_{\tpi \in S} \tpi.$$
In the definition of $\pi_K$ we consider each $\tpi$ to be a representation over $K_{\tpi}$,
so that $\pi_K$ is an $R_K$-representation of $G$.

Each $\tpi$ arises by base change from $\pi^{\univ}$ via a canonical map
$R_{\pi}^{\univ} \rightarrow K_{\tpi}$; together these give a canonical map
$R_{\pi}^{\univ} \rightarrow R_K$.  Let $R$ be its image, and let
$\pi_R$ be the representation $\pi^{\univ} \otimes_{R_{\pi}^{\univ}} R$.
We have $\pi_K \cong \pi_R \otimes_R R_K$.

On the other hand, let $R_0$ be the $W(k)$-subalgebra of $R$ generated by
the traces of elements $\sigma$ of $G$ on $\pi_R$.  By~\cite{Ca}, Theorem 2,
$\pi_R$ is defined over $R_0$.  Thus the surjection
$$R_{\pi}^{\univ} \rightarrow R$$
factors through $R_0$.  We must therefore have $R_0 = R$.  

One thus has an explicit description of $R$ in terms of
the character table of $G$: for each $\sigma$ in $G$, let $x_{\sigma}$
be the element of $R_K$ defined by:
$$(x_{\sigma})_{\tpi} = \tr \sigma|_{\tpi}.$$
Then $x_{\sigma}$ is the trace of $\sigma$ on $\pi_R$, and hence $R$ is
simply the $W(k)$-algebra generated by the $x_{\sigma}$.  

\begin{rem} \rm
The closed immersion $\Spec R \rightarrow \Spec R_{\pi}^{\univ}$
induced by the above surjection is a bijection on $K$-points.  One can
show that the nilpotents of $R_{\pi}^{\univ}$ are supported on
the special fiber of $\Spec R_{\pi}^{\univ}$; it follows that
after inverting $l$ the map $R_{\pi}^{\univ} \rightarrow R$
is an isomorphism.  In other words $R$ is simply the quotient of
$R_{\pi}^{\univ}$ by its $l$-power torsion.  We will not make use of
this, however.
\end{rem}

We will give criteria for the natural map $R_{\pi}^{\univ} \rightarrow R$
to be an isomorphism.  For each $\tpi$, let $e_{\tpi}$ be the idempotent
in $K[G]$ corresponding to $\tpi$.  Also note that for any finite length
local $W(k)$-algebra $A$ with residue field $k$, and any deformation $\pi_A$ of
$\pi$, the center $Z(W(k)[G])$ of $W(k)[G]$ acts on $\pi_A$ by scalars because
of Schur's lemma.  We thus obtain a natural map $Z(W(k)[G]) \rightarrow A$.

\begin{thm} \label{thm:defthy}
Suppose that the following conditions both hold:
\begin{enumerate}
\item The sum $e_{\pi} := \sum_{\tpi \in S} e_{\tpi}$ lies in $\OO[G]$, and
\item For any finite length $W(k)$-algebra $A$ with residue field $k$, and any
deformation $\pi_A$ of $\pi$, the $W(k)$-subalgebra $A_0$ of $A$ generated by the
traces $\tr \sigma|_{\pi_A}$ for $\sigma \in G$ is contained in the image of
the natural map $Z(W(k)[G]) \rightarrow A$.
\end{enumerate}
Then the natural map $R_{\pi}^{\univ} \rightarrow R$ is an isomorphism.
\end{thm}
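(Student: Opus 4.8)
The surjectivity of $R_\pi^{\univ}\to R$ has already been established, so it remains to prove injectivity; since $R$ is by construction the image of $R_\pi^{\univ}$ in $R_K=\prod_{\tpi\in S}K_{\tpi}$, this is equivalent to the injectivity of $R_\pi^{\univ}\to R_K$, i.e.\ to the assertion that the characteristic-zero points $\tpi\in S$ detect every element of $R_\pi^{\univ}$. The plan is to route the whole argument through the commutative ring $Z_\pi:=e_\pi\cdot Z(W(k)[G])$: hypothesis~(1) will make $Z_\pi$ embed into $R_K$, while hypotheses~(1) and (2) together will make the central character of $\pi^{\univ}$ induce a surjection $Z_\pi\twoheadrightarrow R_\pi^{\univ}$; comparing the two maps $Z_\pi\to R_K$ and $Z_\pi\to R_\pi^{\univ}$ then finishes the proof.

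First I exploit hypothesis~(1). The set $S$ is stable under $\Gal(K/\operatorname{Frac}W(k))$ — this group fixes $k$, hence the isomorphism class of $\pi$, hence the condition defining $S$ — and $\OO^{\Gal}=W(k)$, so (1) in fact places $e_\pi$ in $Z(W(k)[G])$. Since $Z(W(k)[G])$ is free over $W(k)$ on the conjugacy-class sums it is $W(k)$-torsion free, and since it is reduced after inverting $l$ it is reduced; both properties pass to its direct factor $Z_\pi$, and as $Z_\pi\otimes_{W(k)}K\cong\prod_{\tpi\in S}K$ the product of the central characters of the $\tpi$ gives an embedding $Z_\pi\hookrightarrow\prod_{\tpi\in S}K_{\tpi}=R_K$. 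Second, the reduction $\bar e_\pi\in Z(k[G])$ is a nonzero central idempotent (nonzero because $e_\pi$ is a nonzero idempotent, using $S\neq\varnothing$) satisfying $\bar e_\pi\,\epsilon_\pi=\bar e_\pi$ for the block idempotent $\epsilon_\pi$ of $\pi$: indeed each $e_{\tpi}$ with $\tpi\in S$ lies in the block of $\pi$, since $\tpi$ carries a lattice reducing to $\pi\otimes\bar k$. Hence $\bar e_\pi=\epsilon_\pi$, so $e_\pi$ lifts the identity endomorphism of $\pi$ and therefore acts as the identity on every deformation of $\pi$. In particular the central character $\zeta\colon Z(W(k)[G])\to R_\pi^{\univ}$ of $\pi^{\univ}$ (which exists by Schur's lemma) kills $1-e_\pi$ and so factors through a map $\bar\zeta\colon Z_\pi\to R_\pi^{\univ}$; moreover the composite $Z_\pi\xrightarrow{\bar\zeta}R_\pi^{\univ}\to R_K$ is exactly the embedding constructed above, because both are computed by the central characters of the $\tpi$. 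Thus $\bar\zeta$ is injective, and it suffices to show that $\bar\zeta$ is surjective.

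For surjectivity I use hypothesis~(2) and Nakayama's lemma. Both $R_\pi^{\univ}$ and the local factor of $Z_\pi$ through which the local homomorphism $\bar\zeta$ necessarily factors are complete Noetherian local $W(k)$-algebras with residue field $k$, so it is enough to prove that $Z(W(k)[G])\to R_\pi^{\univ}/\mathfrak m^2$ is surjective, where $\mathfrak m$ is the maximal ideal of $R_\pi^{\univ}$. Now $R_\pi^{\univ}/\mathfrak m^2$ is generated over $W(k)$ by the traces $\tr(\sigma\mid\pi^{\univ})$: by Carayol (\cite{Ca}, Theorem~2) the reduction of $\pi^{\univ}$ modulo $\mathfrak m^2$ is defined over the $W(k)$-subalgebra generated by its traces, and if that subalgebra were proper the map classifying this reduction — namely the canonical surjection $R_\pi^{\univ}\to R_\pi^{\univ}/\mathfrak m^2$ — would factor through it, contradicting its surjectivity. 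Hypothesis~(2), applied with $A=R_\pi^{\univ}/\mathfrak m^2$, says precisely that these traces lie in the image of $Z(W(k)[G])$, so that image is all of $R_\pi^{\univ}/\mathfrak m^2$. Therefore $\bar\zeta$ is surjective, $R_\pi^{\univ}\to R_K$ is injective, and the natural surjection $R_\pi^{\univ}\to R$ is an isomorphism.

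I expect the step requiring the most care to be the middle one — showing that $\bar e_\pi$ is the block idempotent of $\pi$ and hence that $e_\pi$ acts as the identity on every deformation — since this is where hypothesis~(1) genuinely enters and where one must check the ancillary facts (that $S$ is nonempty and Galois-stable, so that $e_\pi$ descends to $W(k)[G]$ and $\bar e_\pi\neq 0$). The remaining ingredients — the descent of $\pi^{\univ}\bmod\mathfrak m^2$ to its trace subalgebra and the Nakayama argument — are formal.
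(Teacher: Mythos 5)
Your proof is correct, and it reorganizes the same ingredients the paper uses into a slightly different and somewhat more structural argument. The paper proceeds by directly constructing an inverse map $R \to R_{\pi}^{\univ}$: it shows that any polynomial relation $P(a_1,\dots,a_r)=0$ among class functions that holds on $\pi_R$ also holds on $\pi^{\univ}$, via the observation that $e_\pi P(a_1,\dots,a_r)$ vanishes in $K[G]$ together with the Nakayama fact that $e_\pi$ acts as the identity on any deformation of $\pi$; surjectivity of this inverse map then follows from Carayol and condition~(2). You instead route everything through the commutative ring $Z_\pi = e_\pi Z(W(k)[G])$, establishing that the central character gives a bijection $Z_\pi \risom R_\pi^{\univ}$, and then reading off injectivity of $R_\pi^{\univ}\to R_K$ from the fact that the composite $Z_\pi \to R_\pi^{\univ} \to R_K$ is the visible embedding of $Z_\pi$ into $\prod_{\tpi\in S} K_{\tpi}$. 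The essential inputs are identical: $e_\pi$ is the identity on every deformation (Nakayama), Carayol's trace theorem, and condition~(2). Two of your extra observations are worth singling out. The Galois-descent argument showing that condition~(1) actually forces $e_\pi \in W(k)[G]$ is correct and pleasant; the paper sidesteps it by base-changing to $\OO$, which is shorter but less informative. On the other hand, the block-theoretic step $\bar e_\pi = \epsilon_\pi$ is correct but overkill for what you need: all you use downstream is that $e_\pi$ acts as the identity on $\pi$ (hence on every deformation), and this is immediate from the fact that $e_\pi$ acts as the identity on each $\tpi\in S$, hence on an $\OO$-lattice in $\tpi$, hence on its reduction — exactly as the paper argues. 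Finally, your Nakayama/tangent-space reduction of the surjectivity of $\bar\zeta$ to surjectivity mod $\mathfrak m^2$ works, but it is unnecessary: Carayol applied to $R_\pi^{\univ}$ itself together with condition~(2) applied at each finite level $R_\pi^{\univ}/\mathfrak m^n$ (plus the closedness of the image of the finite $W(k)$-algebra $Z(W(k)[G])$) gives surjectivity directly, which is what the paper does.
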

\begin{proof}
The second condition implies that the natural map $Z(W(k)[G]) \rightarrow R$
is surjective; i.e. $R$ is generated by the endomorphisms of $\pi_R$ arising
from class functions in $W(k)[G]$.  Let $a_1, \dots, a_r$ be such class functions,
and let $P \in W(k)[x_1, \dots, x_r]$ be a polynomial such that $P(a_1, \dots, a_r)$
annihilates $\pi_R$.  We will show that $P(a_1, \dots, a_r)$ annihilates any $A$-deformation
$\pi_A$ of $\pi$, over any $A$.  

It suffices to show this after making a base change from $W(k)$ to $\OO$. 
Note that $e_{\pi}$ is the identity on $\tpi$ for any $\tpi$, and hence $e_{\pi}$
is the identity on $\pi$.  By Nakayama's lemma it follows that $e_{\pi}\pi_A = \pi_A$.
It thus suffices to show that $e_{\pi}P(a_1, \dots, a_r)$ is zero in $\OO[G]$,
or even in $K[G]$.  As a $K[G]$ module, $e_{\pi}K[G]$ is a direct sum of copies
of various $\tpi \in S$.  Each such $\tpi$ arises by base change from $\pi_R$
and hence is annihilated by $P(a_1, \dots, a_r)$, as required.

It follows that any such $P(a_1, \dots, a_r)$ annihilates $\pi^{\univ}$.
We thus obtain a well defined map
$$R \rightarrow R_{\pi}^{\univ}$$
that, given an element of $R$, lifts it to an element of $Z(W(k)[G])$
and sends it to the corresponding element of $R_{\pi}^{\univ}$.  It suffices
to show this map is surjective.  But by~\cite{Ca}, Theorem 2, $R_{\pi}^{\univ}$ is
generated by the traces of elements of $G$ on $\pi^{\univ}$, and hence
(by condition 2) by the image of $Z(W(k)[G])$ in $R_{\pi}^{\univ}$.  Since
this is in the image of $R$ the result follows.
\end{proof}

\section{Representations of $\GL_2(\FF_q)$}
We now apply the results of the previous section to the group $G = \GL_2(\FF_q)$, for $q = p^r$.
We first recall some basic facts about the representation theory of $G$ (see for instance~\cite{BH}, chapter 6).  
Let $B$ be the 
standard Borel subgroup of $G$, $N$ its unipotent radical, and $T$ the standard torus in $B$.
Finally, let $Z$ denote the center of $G$.

Let $K$ be an algebraic closure of $\QQ_l$, and consider
two characters $\tphi_1,\tphi_2: \FF_q^{\times} \rightarrow K^{\times}$.  We can view
the pair $\tphi_1,\tphi_2$ as a character of $T$ in the obvious way, and extend it to
a character $\tphi$ of $B$ trivial on $N$.  Then the representation $\Ind_B^G \tphi$ is irreducible
of dimension $q + 1$ if $\tphi_1 \neq \tphi_2$.  If $\tphi_1 = \tphi_2$, then
$\Ind_B^G \tphi$ decomposes as a direct sum of the character $\tphi_1 \circ \det$ 
with an irreducible ``Steinberg representation'' of dimension $q$.

Those irreducible $K$-representations of $G$ that do not occur as subquotients of some
$\Ind_B^G \tphi$ are called
supercuspidal, and can be constructed as follows: Let $\tpsi: N \rightarrow K^{\times}$
be a nontrivial character.  Fix a subgroup $E$ of $G$ isomorphic to $\FF_{q^2}^{\times}$,
and a character $\ttheta: E \rightarrow K^{\times}$, such that $\ttheta^q \neq \ttheta$.
Consider the character $\ttheta_{\tpsi}$ of $ZN$ defined by $\ttheta_{\tpsi}(zu) = \ttheta(z)\tpsi(u)$
for $z \in Z$, $u \in N$.  Then one can compute the character of the virtual representation 
$$\pi_{\ttheta} = \Ind_{ZN}^G \ttheta_{\tpsi} - \Ind_E^G \ttheta.$$
One has:
$$
\begin{array}{lcll}
\tr \pi_{\ttheta}(z) &=& (q - 1)\ttheta(z) & \mbox{for $z \in Z$}\\
\tr \pi_{\ttheta}(zu) &=& -\ttheta(z) & \mbox{for $z \in Z$, $u \in N \setminus \{\Id\}$}\\
\tr \pi_{\ttheta}(t) &=& 0 & \mbox{for $t \in T \setminus Z$}\\
\tr \pi_{\ttheta}(y) &=& -\ttheta(y) - \ttheta^q(y) & \mbox{for $y \in E \setminus Z$.}
\end{array}
$$
This exhausts the conjugacy classes of $G$, and one easily verifies that $\pi_{\ttheta}$
is an irreducible representation of $G$ of dimension $q - 1$.  Moreover, $\pi_{\ttheta}$
is independent of $\tpsi$.  We have $\pi_{\ttheta} = \pi_{\ttheta^q}$, and 
$\pi_{\ttheta} \neq \pi_{\ttheta^{\prime}}$ unless $\ttheta^{\prime} \in \{\ttheta,\ttheta^q\}$.
A simple count then shows that every supercuspidal $K$-representation of $G$ is of this form.

We also observe that the representation $\Ind_N^G \tpsi$ contains every irreducible representation
of $G$ except for those of the form $(\tphi_1 \circ \det)$, with multiplicity one (c.f.~\cite{BH}, 
p. 48.)

From this information it is straightforward to compute the irreducible representations of
$G$ over $\overline{\FF}_l$, for $l$ odd and prime to $p$.  (All that is necessary is modular character
theory at the level of \cite{serre}, chapter 18.)  This is well-known, but we summarize the
results below for ease of reference:

Since the order of $G$ is $(q^2 - 1)(q^2 - q)$,
if $q$ is not congruent to $\pm 1$ mod $l$ the representation theory over $\overline{\FF}_l$
is ``the same'' as in characteristic zero; reduction mod $l$ takes the character of
a representation in characteristic zero to the character of the corresponding representation
mod $l$, and this correspondence is a bijection.  There are thus two cases of interest to us:
$l$ an odd divisor of either $q - 1$ or $q + 1$.

First assume $q \equiv 1$ mod $l$.  Given 
$\phi_1,\phi_2: \FF_q^{\times} \rightarrow \overline{\FF_l}^{\times}$
we obtain a character $\phi$ of $B$ as above; then $\Ind_B^G \phi$ is irreducible
if $\phi_1 \neq \phi_2$, and splits as a direct sum of a character and a Steinberg representation
otherwise.  

The irreducible representations that do not arise in the above way can be described as follows:
for each character $\theta: \FF_{q^2}^{\times} \rightarrow \overline{\FF_l}^{\times},$
satisfying $\theta^q \neq \theta$, we can lift $\theta$ to a character $\ttheta$ with
values in $K^{\times}$.  It is easy to see by computing the modular character that the mod $l$ 
reduction of $\pi_{\ttheta}$ is an irreducible $\overline{\FF}_l$-representation $\pi_{\theta}$ of 
$G$, that depends only on $\theta$ and not on $\ttheta$.  As in characteristic zero, we have
$\pi_{\theta} = \pi_{\theta^{\prime}}$ if, and only if, $\theta^{\prime} \in \{\theta,\theta^q\}$.
A counting argument shows that every irreducible 
$\overline{\FF_l}^{\times}$-representation of $G$ that does not arise from parabolic induction is
of the form $\pi_{\theta}$.

If $q \equiv -1$ mod $l$, the situation is slightly different.  As in the previous case,
given $\phi_1,\phi_2$ we have $\Ind_B^G \phi$ irreducible if $\phi_1 \neq \phi_2$.  On the other
hand, if $\phi_1 = \phi_2$, the Jordan-Holder constituents of $\Ind_B^G \phi$ consist
of two copies of the character $(\phi_1 \circ \det)$ and an irreducible representation
$\pi_{\phi_1}$ of $G$ of dimension $q - 1$.  The representation $\pi_{\phi_1}$ can
be described as follows: let $\theta: \FF_{q^2}^{\times} \rightarrow \overline{\FF}_l^{\times}$
be defined by $\theta(x) = \phi_1(x^{q+1})$.  Then $\theta^q = \theta$, but we can lift
$\theta$ to a character $\ttheta$ with values in a field of characteristic zero.  Moreover,
since $q \equiv -1$ mod $l$, we can choose $\ttheta$ such that $\ttheta^q \neq \ttheta$.
(This is easily seen to be possible only when $q$ is congruent to $-1$ mod $l$.)  Then
the characteristic zero representation $\pi_{\ttheta}$ reduces mod $l$ to $\pi_{\phi_1}$.  (One
verifies that the mod $l$ reduction of $\pi_{\ttheta}$ is irreducible and that its modular
character agrees with that of $\pi_{\phi_1}$.)
Conversely, if $\theta: \FF_{q^2}^{\times} \rightarrow \overline{\FF}_l^{\times}$ is a character
satisfying $\theta^q = \theta$, then we have $\theta(x) = \phi_1(x^{q+1})$ for some
character $\phi_1$ of $\FF_q^{\times}$; we define $\pi_{\theta}$ to be the representation
$\pi_{\phi_1}$ constructed above.  The representations $\pi_{\theta}$ are not supercuspidal,
as they arise as Jordan-Holder constituents of a representation of the form $\Ind_B^G \phi$,
but they are {\em cuspidal}; i.e., they do not arise as subrepresentations of a representation
of the form $\Ind_B^G \phi$.

As in the $q \equiv 1$ mod $l$ case, the remaining representations of $G$ over $\overline{\FF}_l$
are parametrized by the characters $\theta: \FF_{q^2}^{\times} \rightarrow \overline{\FF}_l^{\times}$
such that $\theta^q \neq \theta$.  Given such a character we define $\pi_{\theta}$ to be
the mod $l$ reduction of $\pi_{\ttheta}$, where $\ttheta$ is any lift of $\theta$ to characteristic
zero.  As before, $\pi_{\theta}$ does not depend on the particular $\ttheta$ chosen, and
is an irreducible supercuspidal representation of $G$ over $\overline{\FF}_l^{\times}$.

Now fix a particular $q$, $l$, and $\theta: \FF_{q^2}^{\times} \rightarrow \overline{\FF}_l^{\times}$.
We require that $\theta$ admits a lift $\ttheta$ to characteristic zero such that
$\ttheta^q \neq \ttheta$; in this case the representation $\pi_{\theta}$ is well-defined.
Let $k$ be a finite field of characteristic $l$ over whice $\pi_{\theta}$ is defined.
Our main goal is to apply the theory of the previous section to the deformation
theory of $\pi_{\theta}$.  

Let $S_{\theta}$ denote the set of equivalence classes of characters 
$\ttheta: \FF_{q^2}^{\times} \rightarrow K^{\times}$ such that $\ttheta^q \neq \ttheta$
and the mod $l$ reduction of $\ttheta$ is $\theta$.
We consider two such characters $\ttheta,\ttheta^{\prime}$ equivalent if $\ttheta^q = \ttheta^{\prime}$,
and use the notation $[\ttheta]$ for the equivalence class of $\ttheta$.
(Note that this equivalence relation is trivial unless $\theta^q = \theta$ and $l$ divides $q+1$.)  
Then the correspondence $\ttheta \mapsto \pi_{\ttheta}$
defines a bijection between the equivalence classes in $S_{\theta}$ and the lifts of $\pi_{\theta}$
to representations over $K$.  

\begin{prop} The conditions of Theorem~\ref{thm:defthy} apply to $\pi_{\theta}$.
\end{prop}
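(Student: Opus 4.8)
The plan is to verify hypotheses (1) and (2) of Theorem~\ref{thm:defthy} for $\pi=\pi_\theta$ with $S=S_\theta$.

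For (1) I would compute $e_{\pi_\theta}=\sum_{[\ttheta]\in S_\theta}e_{\pi_{\ttheta}}$ directly from the character table recalled above. Since each $\pi_{\ttheta}$ has dimension $q-1$, the coefficient of $\sigma\in G$ in $e_{\pi_\theta}$ is $\tfrac{q-1}{|G|}\sum_{[\ttheta]}\tr\pi_{\ttheta}(\sigma^{-1})$, which by the character formulas vanishes unless $\sigma$ is central, central-times-unipotent, or elliptic (conjugate into $E$). In the surviving cases it reduces to sums of the form $\tfrac{q-1}{|G|}\sum_{\ttheta}\ttheta(y)^{-1}$ and $\tfrac{q-1}{|G|}\sum_{\ttheta}\bigl(\ttheta(y)^{-1}+\ttheta(y)^{-q}\bigr)$, the sums running over the lifts $\ttheta$ of $\theta$. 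Writing $q^2-1=l^an$ with $l\nmid n$ and letting $\hat\theta$ be the Teichm\"uller lift of $\theta$, one has $\sum_{\ttheta}\ttheta(y)^{-1}=l^a\hat\theta(y)^{-1}$ when the order of $y$ is prime to $l$ and $0$ otherwise. The point is that $\tfrac{(q-1)l^a}{|G|}=\tfrac1{nq}$, whose denominator is prime to $l$ and hence a unit in $\OO$; as the Teichm\"uller values lie in $\OO$, every coefficient of $e_{\pi_\theta}$ lies in $\OO$. (When $\theta^q=\theta$ and $l\mid q+1$ one must also subtract the contribution of the unique lift $\ttheta$ with $\ttheta^q=\ttheta$, which is $\hat\theta$ itself; keeping track of this is the delicate part of (1).)

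For (2), fix a finite length $W(k)$-algebra $A$ with residue field $k$ and a deformation $\pi_A$ of $\pi_\theta$; by Schur $Z(W(k)[G])$ acts through scalars, and the image of $Z(W(k)[G])\to A$ contains $W(k)$ and the central character $\omega_A$. It suffices to put $\tr\sigma|_{\pi_A}$ in this image for one $\sigma$ in each conjugacy class. For $\sigma$ central this is immediate, as $\tr\sigma|_{\pi_A}=(q-1)\omega_A(\sigma)$. For $\sigma$ split regular I would use $l\neq p$: for each conjugate $N'$ of $N$ the element $\tfrac1q\sum_{v\in N'}v$ acts on $\pi_A$ as an idempotent onto $\pi_A^{N'}$, which vanishes since its reduction is $\pi_\theta^{N'}=0$ (cuspidality) and $\mm_A$ is nilpotent; hence $\sum_{v\in N'}\pi_A(v)=0$, and because the split regular elements with eigenvalue pair $\{a,b\}$ fixing a given line form a coset of the corresponding $N'$, taking traces gives $q\cdot\tr\sigma|_{\pi_A}=0$, so $\tr\sigma|_{\pi_A}=0$. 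For $\sigma=zu$ with $z$ central and $u\neq1$ unipotent, I would pass to the mirabolic subgroup $P$ of matrices with bottom row $(0\ 1)$: because $\pi_\theta$ is cuspidal its contragredient has a Whittaker model, giving a surjection $\pi_\theta|_P\twoheadrightarrow\Ind_N^P\psi$ between $k$-spaces of dimension $q-1=[P:N]$, hence an isomorphism; and $\Ind_N^P\psi$ is rigid since $\Ext^1_{k[P]}(\Ind_N^P\psi,-)\cong\Ext^1_{k[N]}(\psi,-)=0$, $k[N]$ being semisimple. Thus $\pi_A|_P\cong(\Ind_N^P\psi)\otimes_kA$, and the elementary identity $\tr u|_{\Ind_N^P\psi}=-1$ gives $\tr\sigma|_{\pi_A}=-\omega_A(z)$. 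For $\sigma$ elliptic, the class sum $\kappa_\sigma\in Z(W(k)[G])$ acts by a scalar $s_\sigma$ in the image, and comparing traces ($|C_\sigma|=q(q-1)$, $\dim\pi_A=q-1$) yields $(q-1)\bigl(q\cdot\tr\sigma|_{\pi_A}-s_\sigma\bigr)=0$; when $l\nmid q-1$ this makes $q-1$ a unit, so $\tr\sigma|_{\pi_A}=q^{-1}s_\sigma$ lies in the image.

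The principal obstacle is the elliptic case when $l\mid q-1$, where $q-1$ is a zero divisor in $A$ and the above only controls $\tr\sigma|_{\pi_A}$ modulo $l$-power torsion. To handle this I would study $\pi_A|_E$: over $k$, $\Res_E\pi_\theta$ is the multiplicity-free sum of the $q-1$ characters of $E$ restricting to $\omega_\theta$ on $Z$ and distinct from $\theta,\theta^q$, and since these are pairwise non-isomorphic with no extensions among them the idempotents lift, so $\Res_E\pi_A$ is a sum of $q-1$ characters $\chi_i$ of $E$, each restricting to $\omega_A$ on $Z$; then $\tr\sigma|_{\pi_A}=\sum_i\chi_i(\alpha)=-\bigl(\ttheta_A(\alpha)+\ttheta_A(\alpha)^q\bigr)$ for the omitted deformed character $\ttheta_A$, and one tries to place this in the image of $Z(W(k)[G])$ using that $\ttheta_A(\alpha)\ttheta_A(\alpha)^q=\omega_A(\Nm\alpha)$ with $\Nm\alpha\in Z$. (Equivalently, compute $R^{\univ}_{\pi_\theta}$ directly from its Kirillov/Bessel‑function model and check $l$-torsion freeness.) This is also where the supercuspidal case $\theta^q\neq\theta$ is genuinely easier than the merely cuspidal one.
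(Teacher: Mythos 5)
Most of your outline is sound and runs parallel to the paper's proof, though you route the Borel part differently: the paper shows in one stroke that $\Res^G_B\pi_A\cong\Ind_{ZN}^B\chi_A\tPsi$ (by lifting the $\tPsi$-isotypic piece, $N$ being a $p$-group) and reads off all traces on $B$ from that, whereas you treat split regular elements via the vanishing of $\pi_A^{N'}$ and the elements $zu$ via the mirabolic; both of your arguments are correct, and your verification of condition (1) is if anything more explicit than the paper's (which simply asserts integrality of $e_{\pi_\theta}$ from the character table). Your class-sum argument for elliptic $\sigma$ when $l\nmid q-1$ is exactly the paper's.

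The genuine gap is the end of the elliptic case when $l\mid q-1$. You correctly reduce to showing that $\theta_A(\sigma)+\theta_A(\sigma)^q$ lies in the image of $Z(W(k)[G])\to A$, where $\theta_A$ is the unique lift of $\theta$ agreeing with the central character $\omega_A$ on $Z$, but the relation $\theta_A(\sigma)\theta_A(\sigma)^q=\omega_A(\Nm\sigma)$ you propose to use cannot suffice: knowing the product of two units in $A$ says nothing about their sum. The step you are missing is elementary but essential: since $l$ is odd, $l\mid q-1$ forces $l\nmid q+1$, so the ($$unique$$, as $E\cong\FF_{q^2}^{\times}$ is cyclic) $l$-Sylow subgroup of $E$ is already contained in $Z\cong\FF_q^{\times}$. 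Hence any $\sigma\in E\setminus Z$ factors as $\sigma=\tau\tau'$ with $\tau\in Z$ of $l$-power order and $\tau'$ of order prime to $l$; then $\theta_A(\tau')$ is the Teichm\"uller value $\hat\theta(\tau')\in W(k)$, while $\theta_A(\tau)=\omega_A(\tau)$ lies in the image of $Z(W(k)[G])$, so $\theta_A(\sigma)=\omega_A(\tau)\hat\theta(\tau')$ and likewise $\theta_A(\sigma)^q$ both lie in that image. (This also requires the uniqueness of the lift $\theta_A$ extending $\omega_A$, which again uses $l\nmid q+1$: two such lifts differ by a character of $E/Z$, a group of order prime to $l$, into $1+\mm_A$.) Your fallback suggestion of computing $R^{\univ}_{\pi_\theta}$ from a Kirillov model is not developed enough to count as a proof; with the factorization above inserted, your argument closes and agrees with the paper's.
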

\begin{proof}
For each $\ttheta \in S_{\theta}$, let $e_{\ttheta}$
be the idempotent in $K[G]$ corresponding to $\pi_{\ttheta}$.
The idempotent $e_{\pi_{\theta}}$ appearing in condition 1) of Theorem~\ref{thm:defthy} is given
by:
$$e_{\pi_{\theta}} = \sum_{\ttheta \in S_{\theta}} e_{\ttheta}.$$
It follows directly from our calculation of the characters of the representations $\pi_{\ttheta}$
that $e_{\pi_{\theta}}$ lies in $\OO[G]$, so condition 1) is satisfied.

We now turn to condition 2) of Theorem~\ref{thm:defthy}.  Note that it suffices to verify this condition
after base change from $W(k)$ to $W(k^{\prime})$, for $k^{\prime}$ a finite extension of $k$.  We may
therefore assume we have a nontrivial character $\Psi$ of $N$ with values in $k^{\times}$;
$\Psi$ lifts uniquely to a character $\tPsi$ with values in $W(k)^{\times}$.  We consider $\Psi$
and $\tPsi$ as characters of $ZN$ that are trivial on $Z$.  We may also assume that
all $n$th roots of unity are in $k$ for $n | q^2 - 1$, and $n$ prime to $l$.  (In other words,
that all characters $E \rightarrow \overline{\FF_l}^{\times}$ take values in $k^{\times}$.)
Let $\chi$ be the restriction
of $\theta$ to the center $Z$ of $G$, and extend it to a character of $ZN$ trivial on $N$.
Note that $\Res^G_B \pi_{\theta} = \Ind_{ZN}^B \chi\Psi$.

Let $A$ be a finite length $W(k)$-algebra with residue field $k$,
and let $\pi_A$ be an $A$-deformation of $\pi_{\theta}$.  Let $\chi_A$ be the
central character of $\pi_A$.  
By definition, $Z$ acts on $\pi_A$ by $\chi_A$; since $N$ has order prime to $l$
and $\pi_{\theta}$ contains a subspace on which $N$ acts by $\chi$, $\pi_A$ contains
a $N$-stable direct summand on which $N$ acts by $\tPsi$.  Thus $\pi_A$
contains a $ZN$-subrepresentation isomorphic to $\chi_A\tPsi$, so we obtain
a nonzero map $\Ind_{ZN}^B \chi_A\tPsi \rightarrow \Res^G_B \pi_A$.  This map
reduces modulo the maximal ideal $m_A$ of $A$ to the corresponding map 
$\Ind_{ZN}^B \chi\Psi \rightarrow \pi_{\theta}$, which is an isomorphism of
$B$-representations.  Thus $\Res^G_B \pi_A$ is isomorphic to $\Ind_{ZN}^B \chi_A\tPsi$.

One sees easily from this that for any $\sigma \in B$, the trace of $\sigma$ on $\pi_A$
is in the subalgebra of $A$ generated by $\chi_A(Z)$, and hence in the image of the
map $Z(W(k)[G]) \rightarrow A$.  Since any $\sigma$ in $G$ is conjugate either to
an element of $B$ or an element of $E \setminus Z$, it remains to verify
condition 2) for elements of $E \setminus Z$.

Suppose first that $l$ does not divide $q-1$.  Let $\sigma$ be an element of
$E \setminus Z$, and note that there are $q(q-1)$ elements conjugate to $\sigma$.
The element:
$\sum_{\sigma^{\prime} \sim \sigma} \sigma^{\prime}$
of $Z(W(k)[G])$ acts on $\pi_A$ with trace $q(q-1)\tr(\sigma)$ on $\pi_A$.
On the other hand, it acts on $\pi_A$ by a scalar $c$ so its trace is $(q-1)c$.
Thus $\tr(\sigma)$ is $\frac{c}{q}$ which indeed lies in the image of $Z(W(k)[G])$ in $A$.

Now suppose that $l$ does not divide $q+1$.  In this case $\Res^G_E \pi_{\theta}$
is a sum of characters $\theta^{\prime}: E \rightarrow k^{\times}$, where
$\theta^{\prime}$ agrees with $\chi$ on $Z$ and $\theta^{\prime} \notin \{\theta,\theta^q\}.$
There are exactly $q-1$ such $\theta^{\prime}$ and each occurs with multiplicity one.  Moreover,
each such $\theta^{\prime}$ admits a unique lift to an $A^{\times}$-valued
character $\theta^{\prime}_A$ such that $\theta^{\prime}_A(\sigma) = \chi_A(\sigma)$
for $\sigma \in Z$.  Then $\Res^G_E \pi_A$ is the direct sum of these $\theta^{\prime}_A$.

It follows that for $\sigma \in E$, the trace of $\sigma$ on $\chi_A$ is
given by $-\theta_A(\sigma) - \theta_A^q(\sigma)$, where $\theta_A$ is the unique lift
of $\theta$ that agrees with $\chi_A$ on $Z$.  Since $l$ does not divide $q+1$, we
can write $\sigma = \tau\tau^{\prime}$ where $\tau$ is in $Z$ and $\tau^{\prime}$
has order prime to $l$.  Then $\theta_A(\tau^{\prime})$ is just the Teichmuller lift
of $\theta(\tau^{\prime})$, and therefore lies in the image of $W(k)$ in $A$.
On the other hand, $\theta_A(\tau)$ is equal to $\chi_A(\tau)$, and hence lies in
the image of $Z(W(k)[G])$ in $A$.
\end{proof}

This gives us an explicit description of the universal deformation ring
$R_{\pi_{\theta}}^{\univ}$ of $\pi_{\theta}$.  In particular, the maps
$$f_{\ttheta}: R_{\pi_{\theta}}^{\univ} \rightarrow K$$
arising from the representations $\pi_{\ttheta}$ for $\ttheta \in S_{\theta}$
identify $R_{\pi_{\theta}}^{\univ}$ with the W(k)-subalgebra of $\prod_{[\ttheta] \in S_{\theta}} K$
generated by the elements $x_{\sigma}$ for $\sigma \in G$, where
$$(x_{\sigma})_{[\ttheta]} = \tr \sigma|_{\pi_{\ttheta}}.$$

Explicitly, we have:
$$
\begin{array}{lcll}
(x_{\sigma})_{[\ttheta]} &=& (q-1)\ttheta(\sigma) & \mbox{for $\sigma \in Z$}\\
(x_{\sigma\tau})_{[\ttheta]} &=& -\ttheta(\sigma) & \mbox{for $\sigma \in Z$, $\tau \in N \setminus \{\Id\}$}\\
(x_{\sigma})_{[\ttheta]} &=& 0 & \mbox{for $\sigma \in T \setminus Z$}\\
(x_{\sigma})_{[\ttheta]} &=& -\ttheta(\sigma) - \ttheta^q(\sigma) & \mbox{for $\sigma \in E \setminus Z$.}
\end{array}
$$

Suppose $\theta \neq \theta^q$.  Then the equivalence relation on $S_{\theta}$ is trivial, so we can
simply write $\ttheta$ in place of $[\ttheta]$ when discussing elements of $S_{\theta}$.
Define elements $y_{\sigma}$ of $\prod_{\ttheta \in S_{\theta}} K$
by $(y_{\sigma})_{\ttheta} = \ttheta(\sigma)$ for $\sigma \in E$.  It is clear that $R_{\pi_{\theta}}^{\univ}$
is contained in the W(k)-subalgebra of $\prod_{\ttheta \in S_{\theta}} K$ generated by the $y_{\sigma}$.  On
the other hand, this subalgebra is simply $R_{\theta}^{\univ}$, the universal deformation ring
of $\theta$.  We thus obtain a map 
$$R_{\pi_{\theta}}^{\univ} \rightarrow R_{\theta}^{\univ}.$$

\begin{thm} \label{thm:lvl0} The map
$$R_{\pi_{\theta}}^{\univ} \rightarrow R_{\theta}^{\univ}$$
is an isomorphism.  In particular there is a natural bijection between
$A$-deformations of $\pi_{\theta}$ and $A$-deformations of $\theta$
that induces the correspondence $\ttheta \mapsto \pi_{\ttheta}$
on lifts of $\theta$ to $K^{\times}$.
\end{thm}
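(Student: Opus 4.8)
The plan is to use the concrete realizations of both rings as $W(k)$-subalgebras of $\prod_{\ttheta \in S_\theta} K$. By the Proposition just proved and Theorem~\ref{thm:defthy}, $R_{\pi_\theta}^{\univ}$ is the subalgebra $R$ generated by the functions $x_\sigma$ ($\sigma \in G$) tabulated in the display above; by the discussion preceding the statement, $R_\theta^{\univ}$ is the subalgebra generated by the functions $y_\sigma$ ($\sigma \in E$), $(y_\sigma)_\ttheta = \ttheta(\sigma)$; and the map of the theorem is simply the inclusion $R \hookrightarrow R_\theta^{\univ}$. In particular it is automatically injective, so the whole point is surjectivity, i.e. that $y_\sigma \in R$ for every $\sigma \in E$.

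For $\sigma$ in the center $Z$ (which is contained in $E$) this is immediate: choosing $\tau \in N \setminus \{\Id\}$, the table gives $x_{\sigma\tau} = -y_\sigma$, so $y_\sigma \in R$. The real case is $\sigma \in E \setminus Z$, where the table yields only the $q$-symmetric combination $y_\sigma + y_{\sigma^q} = -x_\sigma$; the products $y_\sigma y_{\sigma^q} = y_{\sigma\sigma^q}$ also lie in $R$ since $\sigma\sigma^q \in Z$, but as $R$ need not be integrally closed in $\prod_\ttheta K$ this does not let one isolate $y_\sigma$ as a root of a quadratic over $R$. This symmetry comes from the identity $\pi_\ttheta = \pi_{\ttheta^q}$, and breaking it is where the hypothesis $\theta \neq \theta^q$ enters.

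The device is to twist $\sigma$ by a well-chosen element of prime-to-$l$ order. Since $\theta \neq \theta^q$, the character $\delta \mapsto \theta(\delta)^{q-1}$ of $\FF_{q^2}^\times$ is nontrivial, so its kernel is a proper (hence index $\geq 2$) subgroup; as the $\delta$ with $\sigma\delta \in Z$ form a single coset of the subgroup $\{x : x^q = x\}$ of order $q-1$, a cardinality count (using $q - 1 < (q^2-1)/2$) produces $\delta \in \FF_{q^2}^\times$ with $\theta(\delta)^{q-1} \neq 1$ and $\sigma\delta \notin Z$. Enlarging $k$ if needed — harmless, as $W(k) \to W(k')$ is faithfully flat and the map of the theorem is an isomorphism iff it is after such an extension — let $d \in W(k)^\times$ be the Teichm\"uller lift of $\theta(\delta)$. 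Then $d^q - d = d(d^{q-1} - 1)$ is a unit of $W(k)$: it is nonzero modulo $l$ precisely because $\theta(\delta)^{q-1} \neq 1$. Since $\delta$ has order prime to $l$, $\ttheta(\delta) = d$ for every $\ttheta \in S_\theta$, so $y_\delta$ and $y_{\delta^q}$ are the scalars $d$ and $d^q$ in $\prod_\ttheta K$; expanding $\ttheta(\sigma\delta) = \ttheta(\sigma)\ttheta(\delta)$ and $\ttheta((\sigma\delta)^q) = \ttheta(\sigma^q)\ttheta(\delta)^q$ and using $\sigma\delta \in E \setminus Z$, the table gives $x_{\sigma\delta} = -(d\,y_\sigma + d^q y_{\sigma^q}) \in R$. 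The two relations
$$y_\sigma + y_{\sigma^q} = -x_\sigma, \qquad d\,y_\sigma + d^q y_{\sigma^q} = -x_{\sigma\delta}$$
form a linear system over $R$ with coefficient determinant $d^q - d \in W(k)^\times$, and solving it puts $y_\sigma$ and $y_{\sigma^q}$ in $R$.

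This gives $y_\sigma \in R$ for all $\sigma \in E$, hence $R_\theta^{\univ} \subseteq R$; with the reverse inclusion $R \subseteq R_\theta^{\univ}$ noted before the statement we conclude $R_{\pi_\theta}^{\univ} = R = R_\theta^{\univ}$, so the map is an isomorphism. The final assertion is then formal: an isomorphism of the deformation rings identifies the deformation functors, and because the isomorphism is the identity on the common subring of $\prod_\ttheta K$, the point $\ttheta$ of $\Spec R_\theta^{\univ}$ matches the point of $\Spec R_{\pi_\theta}^{\univ}$ corresponding to $\pi_\ttheta$. The step I expect to be the main obstacle is the one in the third paragraph — producing the auxiliary $\delta$ so that the twisted character value $x_{\sigma\delta}$ breaks the $\ttheta \leftrightarrow \ttheta^q$ symmetry, i.e. so that $d^q - d$ is a unit of $W(k)$ and not merely nonzero; this is exactly what $\theta \neq \theta^q$ provides.
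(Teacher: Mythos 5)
Your plan is a legitimate resolvent-style alternative to the paper's argument: you break the $\ttheta\leftrightarrow\ttheta^q$ symmetry by solving a $2\times 2$ linear system with determinant $d^q-d\in W(k)^\times$, where the paper instead averages $\ttheta(\sigma\tau)+\ttheta^q(\sigma\tau)$ over all $\tau$ in the prime-to-$l$ subgroup $E^l$ of $E$, weighted by $\ttheta_l^{-1}(\tau)$, so that orthogonality of characters kills the $\ttheta^q$ term. Both exploit the same fact, namely that $\theta^{q-1}$ is a nontrivial character factoring through $E^l$. However, there is a genuine gap in your third paragraph. Your cardinality count is carried out over all of $\FF_{q^2}^\times$, so the $\delta$ it produces carries no constraint on the $l$-part of its order; but the very next step, ``Since $\delta$ has order prime to $l$, $\ttheta(\delta)=d$ for every $\ttheta\in S_\theta$,'' requires exactly that constraint, and it is essential --- if $\delta$ has nontrivial $l$-part, $\ttheta(\delta)$ acquires an $l$-power root of unity that varies with $\ttheta$, so $y_\delta$ is no longer the scalar $d$ in $\prod_\ttheta K$ and the relation $d\,y_\sigma + d^q y_{\sigma^q} = -x_{\sigma\delta}$ fails.

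The repair is not simply to rerun the count inside $E^l$: that count would require $\#E^l/\#(Z\cap E^l) > 2$, and this ratio equals the prime-to-$l$ part of $q+1$, which can equal $2$ (for instance $q=17$, $l=3$, with $\theta(-1)=-1$); in that situation, for $\sigma\in E^l$, the two bad sets exactly partition $E^l$ and no suitable $\delta$ of prime-to-$l$ order exists. The cleanest fix is to case on $\sigma$. If $\sigma\notin E^l$ then $\sigma^{-1}Z\cap E^l=\emptyset$ automatically, and any $\delta\in E^l$ with $\theta(\delta)^{q-1}\neq 1$ (which exists since $\theta^{q-1}$ is a nontrivial character factoring through $E^l$) satisfies both constraints. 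If $\sigma\in E^l$, take $\delta\in\sigma^{-1}Z\cap E^l$, a nonempty coset of $Z\cap E^l$; then $\sigma\delta\in Z$, and for $\nu\in N\setminus\{\Id\}$ the table entry $x_{\sigma\delta\nu}=-\ttheta(\sigma\delta)=-d\,y_\sigma$ (with $d=\ttheta_l(\delta)\in W(k)^\times$) gives $y_\sigma\in R$ directly, no linear system needed. With that change your proof goes through; the paper's weighted-average formula is in effect the uniform version of this case split.
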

\begin{proof}
It suffices to show that each $y_{\sigma}$ is in the $W(k)$-algebra generated by the
$x_{\sigma}$.  This is clear for $\sigma \in Z$; each such $y_{\sigma}$ is $-x_{\sigma\tau}$
for a nontrivial $\tau$ in $N$.  

As $\theta$ takes values in a field of characteristic $l$, it has order prime to $l$.  Thus $\theta$
has a unique lift $\ttheta_l$ to with order prime to $l$; $\ttheta_l$ takes values in $W(k)^{\times}$. 
If we let $E^l$ be the subgroup of $E$ of order prime to $l$, then for any
$\ttheta$ in $S$, $\ttheta_l^{-1}\ttheta$ is trivial on $E^l$.  Note that
since $\theta^{q-1}$ is a nontrivial character, so is $\ttheta_l^{\q-1}$.

Then for $\sigma \in E \setminus Z$ we have:
$$\sum_{\tau \in E^l} \ttheta_l^{-1}(\tau)[\ttheta(\sigma\tau) + \ttheta^q(\sigma\tau)]
= \sum_{\tau \in E^l} \ttheta(\sigma) + \ttheta_l^{q-1}(\tau)\ttheta^q(\sigma)
= (\# E_l^{\times}) \ttheta(\sigma).$$
In terms of the $y_{\sigma}$ and $x_{\sigma}$ this becomes:
$$y_{\sigma} = \frac {1}{\# E^l} \sum_{\tau \in E_l^{\times}} \ttheta_l^{-1}(\tau) x_{\sigma}$$
for all $\sigma \in E \setminus Z$.  As $\ttheta_l$ takes values in $W(k)^{\times}$ we are
done.
\end{proof}

We now turn to the case where $\theta = \theta^q$.  In this case $q$ is congruent to $-1$ mod $l$,
and the equivalence relation on the $\ttheta$ in $S$ is nontrivial.  For $\sigma \in E$,
define an element $y_{\sigma}$ of $\prod_{[\ttheta] \in S_{\theta}} K$ by
$$(y_{\sigma})_{[\ttheta]} = \ttheta(\sigma) + \ttheta(\sigma)^q.$$
(This is clearly independent of the choice of $\ttheta$ representing $[\ttheta]$.)
For $\sigma \in \FF_q^{\times}$, $y_{\sigma}$ is simply an element of $W(k)^{\times}$.
In particular the elements $y_{\sigma}$ lie in the $W(k)$-subalgebra of $\prod_{[\ttheta] \in S_{\theta}} K$
generated by the $x_{\sigma}$; that is, we can consider the $y_{\sigma}$ as elements
of $R_{\pi_{\theta}}^{\univ}$.  Moreover, each $x_{\sigma}$ has a simple expression in terms of
the $y_{\sigma}$.  Thus $R_{\pi_{\theta}}^{\univ}$ is the subalgebra of $\prod_{[\ttheta] \in S_{\theta}} K$
generated by the $y_{\sigma}$.

As in the $\theta \neq \theta^q$ case, let $\ttheta_l$ be the unique lift of $\theta$ of order prime to $l$.
(Note that since $\theta = \theta^q$, $\ttheta_l = \ttheta_l^q$ and so is {\em not} in $S_{\theta}$.)
Then $\ttheta_l$ takes values in $W(k)^{\times}$, and any lift $\ttheta$ of $\theta$ agrees with $\ttheta_l$
on $E_l^{\times}$.  Thus for any $\tau \in E_l^{\times}$, $y_{\sigma\tau}$ differs from $y_{\sigma}$ by
a scalar factor in $W(k)^{\times}$.  In particular $R_{\pi_{\theta}}^{\univ}$ is generated by
$y_{\sigma}$ for $\sigma$ of $l$-power order.  In fact, one verifies easily that if $\sigma$ generates
the pro-$l$ part of $E$, then the single element $y_{\sigma}$ generates all of $R_{\pi_{\theta}}^{\univ}$.
We will find it slightly more convenient to use the generator $y_{\sigma} - 2$ instead;
the minimal polynomial of $y_{\sigma} - 2$ is the polynomial $Q$ defined by
$$Q(t)^2 = \prod_{\zeta^{l^n} = 1; \zeta \neq 1} (t - \zeta - \zeta^{-1} + 2),$$
where $n = \ord_l(q^2 - 1).$ We thus have:

\begin{thm} \label{thm:cusp1}
Suppose $q$ is congruent to $-1$ mod $l$, and $\theta^q = \theta$.  Then for any choice
of generator $\sigma$ of the pro-$l$ part of $E$ there is an isomorphism
$$W(k)[[t]]/Q(t) \rightarrow R_{\pi_{\theta}}^{\univ}$$
sending $t$ to $y_{\sigma} - 2$.  Under this isomorphism, the trace of $\sigma$
on $\pi_{\theta}^{\univ}$ is $-t - 2$.
\end{thm}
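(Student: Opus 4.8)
The plan is to exploit the description, established in the preceding paragraphs, of $R_{\pi_{\theta}}^{\univ}$ as the $W(k)$-subalgebra of $\prod_{[\ttheta]\in S_{\theta}}K$ generated by the single element $y_\sigma$. Since $y_\sigma$, and hence $y_\sigma-2$, generates this ring, there is a surjection $W(k)[t]\twoheadrightarrow R_{\pi_{\theta}}^{\univ}$ sending $t$ to $y_\sigma-2$; what remains is to identify its kernel with $(Q(t))$ and to read off the trace of $\sigma$. Both reduce to computing the tuple $(y_\sigma)_{[\ttheta]}$ explicitly.

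First I would pin down $S_{\theta}$. Since $q\equiv-1\bmod l$ and $l$ is odd, $l\nmid q-1$, so $n=\ord_l(q^2-1)=\ord_l(q+1)$ and $l^n$ is exactly the order of the pro-$l$ part $P$ of $E\cong\FF_{q^2}^{\times}$; in particular $q\equiv-1\bmod l^n$. Let $\ttheta_l$ be the lift of $\theta$ of order prime to $l$; uniqueness together with $\theta^q=\theta$ forces $\ttheta_l^q=\ttheta_l$, and $\ttheta_l$ is trivial on $P$, so $\ttheta_l(\sigma)=1$. Every lift $\ttheta$ of $\theta$ is of the form $\ttheta=\ttheta_l\psi$ for a unique character $\psi$ of $\FF_{q^2}^{\times}$ of $l$-power order, and $\ttheta^q/\ttheta=\psi^{q-1}$; as $l\nmid q-1$ the condition $\ttheta^q\neq\ttheta$ becomes $\psi\neq1$, and as $l^n\mid q+1$ the equivalence $\ttheta\sim\ttheta^q$ becomes $\psi\sim\psi^{-1}$. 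Because $\sigma$ generates $P$, the map $\psi\mapsto\psi(\sigma)$ carries the nontrivial $\psi$ of $l$-power order bijectively, and compatibly with inversion, onto the nontrivial $l^n$-th roots of unity, and $q\equiv-1\bmod l^n$ gives $\ttheta(\sigma)^q=\psi(\sigma)^{-1}$. Setting $\zeta=\psi(\sigma)$, this yields $(y_\sigma)_{[\ttheta]}=\ttheta(\sigma)+\ttheta(\sigma)^q=\zeta+\zeta^{-1}$, and as $[\ttheta]$ runs over $S_{\theta}$ the pair $\{\zeta,\zeta^{-1}\}$ runs exactly once over the pairs of mutually inverse nontrivial $l^n$-th roots of unity; since $l$ is odd, no such $\zeta$ equals its own inverse, so the resulting values $\zeta+\zeta^{-1}-2$ are pairwise distinct.

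It follows that $y_\sigma-2$ has $(l^n-1)/2$ pairwise distinct coordinates --- the numbers $\zeta+\zeta^{-1}-2$ for $\zeta$ a nontrivial $l^n$-th root of unity taken up to inversion --- so its minimal polynomial over $K$ is $m(t)=\prod(t-\zeta-\zeta^{-1}+2)$, the product over one $\zeta$ from each inverse pair. Squaring doubles every factor, so $m(t)^2=\prod_{\zeta^{l^n}=1,\ \zeta\neq1}(t-\zeta-\zeta^{-1}+2)=Q(t)^2$, and as $m$ and $Q$ are monic, $m=Q$. Along the way I would observe that $Q(t)^2\in\ZZ[t]$ is monic, so $Q\in\ZZ[t]\subseteq W(k)[t]$ by Gauss's lemma, and that a direct computation (using $1+x+\cdots+x^{l^n-1}\equiv(x-1)^{l^n-1}$ in characteristic $l$) gives $Q(t)\equiv t^{(l^n-1)/2}\bmod l$, so that $Q$ is distinguished and $W(k)[[t]]/Q(t)=W(k)[t]/Q(t)$. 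Finally, any $P\in W(k)[t]$ killing $y_\sigma-2$ vanishes at each of its $(l^n-1)/2$ distinct coordinates; dividing $P$ by the monic polynomial $Q$ and bounding the degree of the remainder shows $Q\mid P$. Hence the kernel of $W(k)[t]\to R_{\pi_{\theta}}^{\univ}$ is $(Q)$, and $W(k)[[t]]/Q(t)\xrightarrow{\sim}R_{\pi_{\theta}}^{\univ}$, $t\mapsto y_\sigma-2$.

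For the final assertion, $\sigma$ is nontrivial of $l$-power order whereas $Z\cong\FF_q^{\times}$ has order prime to $l$, so $\sigma\in E\setminus Z$; the formula $(x_\sigma)_{[\ttheta]}=-\ttheta(\sigma)-\ttheta^q(\sigma)$ recorded above then gives $(x_\sigma)_{[\ttheta]}=-(y_\sigma)_{[\ttheta]}$ for every $[\ttheta]$, so $x_\sigma=-y_\sigma$, and under the isomorphism the trace $x_\sigma$ of $\sigma$ on $\pi_{\theta}^{\univ}$ is $-y_\sigma=-(t+2)=-t-2$. I expect the real work to be in the second paragraph --- parametrizing $S_{\theta}$ and its equivalence relation and verifying that the coordinates of $y_\sigma$ are precisely the numbers $\zeta+\zeta^{-1}$, each occurring once --- which is where the hypotheses that $l^n$ exactly divides $q+1$ and that $l$ is odd enter.
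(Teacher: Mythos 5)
Your proof is correct and follows the same route the paper sketches in the paragraph preceding the theorem: parametrize $S_{\theta}$ via $l$-power-order characters $\psi$ with $\ttheta=\ttheta_l\psi$, compute each coordinate of $y_{\sigma}$ as $\zeta+\zeta^{-1}$ with $\zeta=\psi(\sigma)$, and identify the kernel of $W(k)[t]\to R_{\pi_{\theta}}^{\univ}$ with $(Q)$ by a counting-roots argument. The paper leaves these verifications to the reader (``one verifies easily''); your write-up supplies them in full, including the points that $Q\in W(k)[t]$ by Gauss's lemma and that $Q$ is distinguished so $W(k)[[t]]/(Q)=W(k)[t]/(Q)$.
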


Thus, for any complete Noetherean local $W(k)$-algebra $A$, and any $\alpha$ in $A$
with $Q(\alpha) = 0$, there is a unique $A$-deformation $\pi_{\theta,\alpha}$ of $\pi_{\theta}$
for which $\tr \pi_{\theta,\alpha}(\sigma) = -\alpha - 2$.
 
\section{Cuspidal representations of $\GL_2(F)$} \label{sec:cuspidal}

We now turn to the deformation theory of cuspidal representations.  Fix a $p$-adic field
$F$, with residue field $\FF_q$, and a prime $l$ different from $p$.  Our main goal
will be to compute universal deformation rings of irreducible cuspidal representations $\pi$
of $\GL_2(F)$ over $\overline{\FF}_l$.  

\begin{defn} Let $\pi$ be an irreducible admissible representation of $\GL_2(F)$
over $\overline{\FF}_l$, and let $A$ be a finite length local $W(\FF_l)$-algebra,
with maximal ideal $m_A$.  An $A$-deformation of $\pi$ is
a free $A$-module $\pi_A$ with an action of $\GL_2(F)$, together with an isomorphism 
$\pi_A/m_A \pi_A \cong \pi,$ such that $\pi_A$ is smooth; i.e every element of
$\pi_A$ is fixed by a compact open subgroup of $\GL_2(F)$. 
\end{defn}
Note that if $U$ is a compact open subgroup of $\GL_2(F)$, of order prime to $l$,
then taking $U$-invarants is exact; it follows that $\pi_A^U$ will be finitely
generated for all $U$ of this sort.  Since such $U$ are cofinal in the set of
all compact open subgroups of $\GL_2(F)$,
$\pi_A$ will be admissible as a $\GL_2(F)$-module. 

The key tool we will use to compute the deformation theory of representations $\pi$ is
the theory of types.  In characteristic $l$ this theory is due to Vign{\'e}ras \cite{vigbook}.
What we need will follow most easily from Bushnell-Henniart's very explicit description
of the types of cuspidal $\overline{\QQ}_l$-representations of $\GL_2(F)$.  

Fix a character $\tpsi$ of $F$ of level one, with values in $\overline{QQ}_l^{\times}$.  Then for any simple 
stratum $(\FA,n,\alpha)$ in $M_2(F)$, we have a character $\tpsi_{\alpha}$ of $U_{\FA}^{[\frac{n}{2}]+1}$
defined by
$$\tpsi_{\alpha}(x) = \tpsi(\tr(\alpha (x-1)).$$
(c.f.~\cite{BH}, 12.5).  Let $\psi_{\alpha}$ be the mod $l$ reduction of $\tpsi_{\alpha}$.  We then have:

\begin{thm}[\cite{BH}, 15.5]
Let $\tpi$ be an irreducible, cuspidal representation of $\GL_2(F)$ over $\overline{\QQ}_l$.  
Then $\tpi = (\cInd_J^{\GL_2(F)} \tLambda^{\prime}) \otimes (\chi \circ \det)$, where $\chi$ is a character
of $F^{\times}$ and either:
\begin{enumerate}
\item $J = F^{\times}\GL_2(\OO_F)$, and
the restriction of $\tLambda^{\prime}$ to $\GL_2(\OO_F)$ is the inflation of an irreducible, cuspidal
representation of $\GL_2(\FF_q)$, or
\item there exists a stratum $(\FA, n, \alpha)$ such that $n$ is odd, 
$J = E^{\times}U_{\FA}^{\frac{n+1}{2}}$, and $\tLambda^{\prime}$ is an irreducible representation
of $J$ whose restriction of to $U_{\FA}^{\frac{n+1}{2}}$ is the character $\tpsi_{\alpha}$, or
\item there exists a stratum $(\FA, n, \alpha)$ such that $n$ is even,
$J = E^{\times}U_{\FA}^{\frac{n}{2}}$, and $\tLambda^{\prime}$ is an irreducible representation of $J$
whose restriction to $U_{\FA}^{\frac{n}{2}+1}$ is a multiple of $\tpsi_{\alpha}$.
\end{enumerate}
Here $E$ is the field $F[\alpha]$; it is a quadratic extension of $F$.
\end{thm}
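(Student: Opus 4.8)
The plan is to invoke~\cite{BH}, 15.5, where this classification is carried out over $\CC$; I would only need to observe that the argument there uses nothing about the coefficient field beyond its being algebraically closed of characteristic zero --- enough roots of unity to see all the relevant characters, semisimplicity of group algebras of finite $p'$-quotients, the Stone--von Neumann uniqueness of Heisenberg representations --- and so applies verbatim over $\overline{\QQ}_l$. Equivalently, fixing an abstract field isomorphism $\overline{\QQ}_l \cong \CC$ transports $\tpi$, its contragredient, all compact open subgroups, and the data $(\FA, n, \alpha)$, $J$, $\tLambda'$, $\tpsi_\alpha$ between the two settings. For completeness, and because the same objects will recur below, let me recall the shape of the argument.

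One attaches to $\tpi$ its normalized level $\ell(\tpi) \ge 0$. If $\ell(\tpi) = 0$, then $\tpi$ has a nonzero vector fixed by the pro-$p$ radical $U^1$ of a maximal compact subgroup $\GL_2(\OO_F)$, and cuspidality forces $\tpi^{U^1}$ to contain an irreducible cuspidal representation of the finite group $\GL_2(\FF_q)$; inflating it to $\GL_2(\OO_F)$ and extending it over $F^\times$ yields $\tLambda'$ with $J = F^\times\GL_2(\OO_F)$, which is case (1). If $\ell(\tpi) > 0$, then $\tpi$ contains a fundamental stratum $(\FA, n, \beta)$; for $\GL_2$ such a stratum is either split or simple, and one shows that a representation containing a split fundamental stratum is a subquotient of a parabolic induction, hence not cuspidal. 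So $\tpi$ contains a simple stratum, which after refinement may be taken of the form $(\FA, n, \alpha)$ with $\alpha$ minimal over $F$ and $E = F[\alpha]$ a quadratic extension. The parity of $n$ then splits the construction of the type along the congruence filtration: for $n$ odd the relevant group is abelian and $\tpi$ contains the character $\tpsi_\alpha$ on $U_\FA^{(n+1)/2}$; for $n$ even one passes through a Heisenberg representation, so $\tpi$ contains a representation of $U_\FA^{n/2}$ whose restriction to $U_\FA^{n/2+1}$ is a multiple of $\tpsi_\alpha$. Extending to $J = E^\times U_\FA^{[n/2]+1}$ produces $\tLambda'$ with the stated restriction, and in all three cases an intertwining computation gives $\tpi \cong (\cInd_J^{\GL_2(F)} \tLambda') \otimes (\chi \circ \det)$ for a suitable character $\chi$ of $F^\times$.

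For the converse inclusion I would show the intertwining set of $\tLambda'$ in $\GL_2(F)$ is contained in $J$ (so $\cInd_J^{\GL_2(F)} \tLambda'$ is irreducible) and that it has no nonzero Jacquet module (a support argument for compactly induced representations), and then match parameters against the known supply of cuspidals to see that the list is exhaustive. The hard part, as always in this circle of ideas, will be the positive-level step: proving that cuspidal representations avoid split fundamental strata, and establishing the existence and uniqueness of the Heisenberg representation attached to a simple stratum with $n$ even. Neither of these is sensitive to whether the coefficients are $\CC$ or $\overline{\QQ}_l$, which is exactly what legitimizes the transfer from~\cite{BH}.
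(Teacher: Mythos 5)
The paper does not prove this statement; it simply cites~\cite{BH}, 15.5, and your proposal does the same thing (invoking BH and observing that the argument transfers from $\CC$ to $\overline{\QQ}_l$ since it uses nothing beyond algebraic closure and characteristic zero), so the approach is essentially identical. The sketch you give of the internal structure of the BH argument — level-zero vs.\ positive level, exclusion of split fundamental strata for cuspidals, the Heisenberg step when $n$ is even, intertwining and exhaustion — is accurate background, apart from a small slip near the end where you write $J = E^\times U_{\FA}^{[n/2]+1}$, which for $n$ even should be $E^\times U_{\FA}^{n/2}$ (you had the even case right two sentences earlier).
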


Results of Vign{\'e}ras (\cite{viglanglands}, Theorems 1.1 and 1.2) allow us to turn the above description
into a characterization of supercuspidal representations of $\GL_2(F)$ over $\overline{\FF}_l$.
In particular, every such representation $\pi$ arises by reduction mod $l$ from a supercuspidal 
representation $\tpi$ over $\overline{\QQ}_l$ whose central character is integral.  More precisely there 
is a unique homothety class of lattices in such a $\tpi$, and the mod $l$ reduction of any such lattice is 
isomorphic to $\pi$. 
Thus if we set $\tpi = (\cInd_J^{\GL_2(F)} \tLambda^{\prime}) \otimes (\chi \circ \det),$ we can find
a $\GL_2(F)$-stable lattice in $\tpi$ by choosing a $J$-stable lattice in $\tLambda^{\prime} \otimes (\chi \circ \det)$.
Reducing this lattice mod $l$ we find:

\begin{cor} \label{cor:modl}
Let $\pi$ be an irreducible, supercuspidal representation of $\GL_2(F)$ over $\overline{\FF}_l$.  
Then $\pi = (\cInd_J^{\GL_2(F)} \Lambda^{\prime}) \otimes (\phi \circ \det)$, where $\phi$ is a character
of $F^{\times}$ and either:
\begin{enumerate}
\item $J = F^{\times}\GL_2(\OO_F)$, and
the restriction of $\Lambda^{\prime}$ to $\GL_2(\OO_F)$ is the inflation of an irreducible, supercuspidal
representation of $\GL_2(\FF_q)$, or
\item there exists a stratum $(\FA, n, \alpha)$ such that $n$ is odd, 
$J = E^{\times}U_{\FA}^{\frac{n+1}{2}}$, and $\Lambda^{\prime}$ is a character
of $J$ whose restriction to $U_{\FA}^{\frac{n+1}{2}}$ is $\psi_{\alpha}$, or
\item there exists a stratum $(\FA, n, \alpha)$ such that $n$ is even,
$J = E^{\times}U_{\FA}^{\frac{n}{2}}$, and $\Lambda^{\prime}$ is an irreducible representation of $J$
whose restriction to $U_{\FA}^{\frac{n}{2}+1}$ is a multiple of $\psi_{\alpha}$.
\end{enumerate}
\end{cor}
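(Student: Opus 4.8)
The plan is to reduce the statement to the characteristic--zero classification of Bushnell--Henniart recalled above: first lift $\pi$ to a cuspidal $\overline{\QQ}_l$-representation, write down its type via \cite{BH}, 15.5, choose a lattice adapted to that type, and finally reduce modulo $l$, using that compact induction commutes with reduction modulo $l$. The point is that all the structure appearing in \cite{BH}, 15.5 --- the group $J$, the dichotomy of strata, the restriction conditions defining $\tLambda'$ --- is ``integral'' and so survives passage to a $J$-stable lattice and its reduction.

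Concretely, by the results of Vign{\'e}ras cited just before the corollary, $\pi$ is the mod $l$ reduction of an irreducible cuspidal $\overline{\QQ}_l$-representation $\tpi$ with integral central character, and the reduction of \emph{any} $\GL_2(F)$-stable lattice in $\tpi$ is isomorphic to $\pi$. Applying \cite{BH}, 15.5 to $\tpi$ I may write $\tpi = (\cInd_J^{\GL_2(F)}\tLambda') \otimes (\chi\circ\det)$ with $(J,\tLambda')$ of one of the three listed shapes and $\chi$ a character of $F^\times$. I would first observe that $\tLambda'\otimes(\chi\circ\det)$ admits a $J$-stable $\overline{\ZZ}_l$-lattice $L$: it is finite dimensional, the quotient $J/F^\times$ is compact in each of the three cases (in case~(1) it is $\GL_2(\OO_F)/\OO_F^\times$; in cases~(2) and~(3) it is a quotient of $E^\times U_{\FA}^m$, and $E^\times/F^\times$ is compact since $E/F$ is finite), and the central character of $\tLambda'\otimes(\chi\circ\det)$ restricted to the center $F^\times \subseteq J$ is the central character of $\tpi$, hence integral. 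Then $\cInd_J^{\GL_2(F)} L$ is a $\GL_2(F)$-stable lattice in $\tpi$, and since $J$ is open and $\cInd_J^{\GL_2(F)}(-)$ is exact and commutes with extension of scalars, its reduction modulo $l$ is $\cInd_J^{\GL_2(F)}(\overline L)$ with $\overline L = L/lL$. By uniqueness of the lattice up to homothety, $\pi \cong \cInd_J^{\GL_2(F)}(\overline L)$, so it remains to identify $\overline L$ in each case; since the type is only well defined up to twisting by a character of $F^\times$, the factor $\phi\circ\det$ in the statement is just the reduction of $\chi\circ\det$ (after moving an unramified twist between $\chi$ and $\tLambda'$ if necessary), and it is harmless to ignore it.

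In cases~(2) and~(3) this is straightforward: the character $\tpsi_\alpha$ of the pro-$p$ group $U_{\FA}^{[n/2]+1}$ takes values in roots of unity of $p$-power order, hence in $\overline{\ZZ}_l^\times$ since $l \neq p$; so the lattice $L$ restricts on $U_{\FA}^{(n+1)/2}$ (resp.\ on $U_{\FA}^{n/2+1}$) to a lattice on which that group acts through $\tpsi_\alpha$ (resp.\ through a multiple of $\tpsi_\alpha$), and reduction mod $l$ replaces $\tpsi_\alpha$ by $\psi_\alpha$; moreover a character of $J$ reduces to a character of $J$, which gives the ``$\Lambda'$ is a character'' clause of case~(2). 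The case requiring genuine care --- and the main obstacle --- is case~(1). There $L$ restricts on $\GL_2(\OO_F)$ to the inflation of a $\GL_2(\FF_q)$-stable lattice in an irreducible cuspidal $\overline{\QQ}_l$-representation of $\GL_2(\FF_q)$, and by the modular character theory for $\GL_2(\FF_q)$ recalled earlier its reduction is the inflation of an irreducible \emph{cuspidal} $\overline{\FF}_l$-representation of $\GL_2(\FF_q)$. What still must be shown is that this reduction is \emph{supercuspidal}, not merely cuspidal: otherwise it would occur as a Jordan--H\"older constituent of some $\Ind_B^{\GL_2(\FF_q)}\phi$, and then $\cInd_J^{\GL_2(F)}(\overline L) \cong \pi$ would occur as a subquotient of a parabolically induced representation of $\GL_2(F)$, contradicting the hypothesis that $\pi$ is supercuspidal. (Equivalently, this last point is part of Vign{\'e}ras' classification of the supercuspidal $\overline{\FF}_l$-representations of $\GL_2(F)$, which one could instead invoke directly.)
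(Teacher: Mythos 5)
Your proposal takes the same route as the paper: lift $\pi$ to a cuspidal $\overline{\QQ}_l$-representation via Vign\'eras, apply \cite{BH}, 15.5, select a $J$-stable lattice, and reduce mod $l$ through compact induction. You flesh out two points the paper leaves implicit --- the existence of a $J$-stable lattice (compactness of $J/F^\times$ plus integrality of the central character) and, more importantly, the fact that in case (1) the reduced type must be \emph{supercuspidal} and not merely cuspidal over $\GL_2(\FF_q)$; both additions are correct and worth having. One caution on the latter: the step from ``the reduced type is a Jordan--H\"older constituent of some $\Ind_B^{\GL_2(\FF_q)}\phi$'' to ``$\cInd_J^{\GL_2(F)}(\overline L)$ is a subquotient of a parabolically induced representation of $\GL_2(F)$'' is not an immediate formal consequence --- it is a comparison between depth-zero types and Bernstein components that belongs to the theory of types itself --- so your parenthetical fallback (invoke Vign\'eras' classification of mod-$l$ supercuspidals directly, which is precisely what the paper's citation of \cite{viglanglands}, Theorems 1.1 and 1.2 does) is the cleaner way to close this step rather than an ``equivalent'' aside.
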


Fix a representation $\pi$ as in the corollary.  It will be convenient to work with
a certain subgroup $U$ of $J$, defined as follows: in case $1$, $U$ is the kernel of the
map: $\GL_2(\OO_F) \rightarrow \GL_2(\FF_q)$.  In cases $2$ and $3$, $U$ is
$U_{\FA}^{[\frac{n+1}{2}]}$.  Let $\Lambda = \Lambda^{\prime} \otimes (\phi \circ \det)$,
and let $V$ be the subspace of $\pi$ on which $U$ acts via $\Lambda.$

As $J$ normalizes $\Res^J_U \Lambda$, $V$ is a $J$-stable subspace of $\pi$.  
In fact, we have:

\begin{lemma} As representations of $J$, we have $V \cong \Lambda$.
\end{lemma}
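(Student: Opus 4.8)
The goal is to show that the $J$-subrepresentation $V\subseteq\pi$ on which $U$ acts by $\Lambda$ is in fact isomorphic, as a $J$-representation, to $\Lambda$ itself. The natural strategy is to exploit the compact induction presentation $\pi=\cInd_J^{\GL_2(F)}\Lambda$ together with a Mackey/Frobenius-reciprocity analysis: by Frobenius reciprocity $\Hom_J(\Lambda,\Res^{\GL_2(F)}_J\pi)=\Hom_J(\Lambda,\cInd_J^{\GL_2(F)}\Lambda)$ contains a canonical copy of $\Lambda$ sitting inside $\pi$ as functions supported on the coset $J$, and one must show that this copy is all of $V$. Equivalently, one shows that the only $g\in\GL_2(F)\setminus J$ contributing to the $\Lambda$-isotypic vectors for $U$ are excluded, i.e.\ that $V$ is exactly the "level-$J$" piece of $\cInd_J^{\GL_2(F)}\Lambda$.

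**Key steps.** First I would set up the Mackey decomposition: as a $U$-representation, $\Res^{\GL_2(F)}_U\cInd_J^{\GL_2(F)}\Lambda=\bigoplus_{g\in U\backslash\GL_2(F)/J}\cInd_{U\cap gJg^{-1}}^U({}^g\!\Lambda)$, and the subspace on which $U$ acts by the character (or isotypic type) determined by $\Lambda$ picks out precisely those double cosets for which ${}^g\!\Lambda$, restricted to $U\cap gJg^{-1}$, is intertwined with $\Res^J_U\Lambda$ on $U$. Second, I would invoke the key intertwining property of simple types from Bushnell–Henniart (\cite{BH}): $\Lambda$ (resp.\ $\Lambda'$) is a type whose $\GL_2(F)$-intertwining set is exactly $J$ — this is what makes $\cInd_J^{\GL_2(F)}\Lambda$ irreducible and cuspidal. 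Concretely, for the character $\psi_\alpha$ on $U_{\FA}^{[\frac{n+1}{2}]}$ in cases 2 and 3 (and for the inflated cuspidal type in case 1), the intertwining of $\psi_\alpha$ is computed in \cite{BH} and is contained in $J=E^\times U_{\FA}^{\lceil\cdot\rceil}$; hence the only double coset contributing to the $\Lambda|_U$-isotypic subspace is $g\in J$. Third, this forces $V$ — being $U$-isotypic of type $\Lambda|_U$ inside $\pi$ and $J$-stable — to equal the image of $\Lambda$ under the canonical inclusion $\Lambda\hookrightarrow\cInd_J^{\GL_2(F)}\Lambda$ (functions supported on $J$), which is visibly $J$-equivariantly isomorphic to $\Lambda$; one checks the inclusion $\Lambda\hookrightarrow V$ is surjective by comparing $U$-isotypic multiplicities, which the Mackey computation has just pinned down. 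In case 1, where $U$ is the congruence kernel and $\Lambda|_U$ is trivial, the same argument runs with "isotypic of type $\Lambda|_U$" replaced by "$U$-fixed vectors", and irreducibility of the inflated cuspidal of $\GL_2(\FF_q)$ closes the gap.

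**Main obstacle.** The crux is the intertwining computation: showing that no $g\notin J$ contributes, i.e.\ that $\Res^J_U\Lambda$ genuinely detects $J$ and not a larger subgroup. In cases 2 and 3 this is the statement that the intertwining of $\psi_\alpha$ in $\GL_2(F)$ is confined to $E^\times U_{\FA}^{[\frac{n+1}{2}]}$, which is precisely the technical heart of the Bushnell–Henniart construction and which we may cite from \cite{BH}; the only real work is translating their normalizations (levels, the half-integer exponents, the distinction between $n$ odd and $n$ even) into the claim as stated. In case 1 the analogous input is that an irreducible cuspidal representation of $\GL_2(\FF_q)$ does not occur in any $\Ind_B^{\GL_2(\FF_q)}$, together with the standard fact that $F^\times\GL_2(\OO_F)$ is the $\GL_2(F)$-stabilizer of such a type up to twist. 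Once these intertwining facts are in hand, the remainder — Mackey bookkeeping and a multiplicity count — is routine, so I would present the intertwining step carefully and compress the rest.
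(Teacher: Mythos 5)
Your proposal matches the paper's argument: the paper likewise restricts $\pi=\cInd_J^{\GL_2(F)}\Lambda$ to $U$ via the Mackey induction--restriction formula, reduces the question to whether $g$ intertwines $\Lambda$, invokes Bushnell--Henniart (specifically \cite{BH}, 15.1, which says the intertwining set of the simple type is exactly $J$) to kill all double cosets other than $J$ itself, and concludes by the resulting multiplicity-one statement that the visible copy of $\Lambda$ (functions supported on $J$) exhausts $V$. Same decomposition, same key citation, same multiplicity count.
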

\begin{proof}
Clearly $\Lambda$ is a $J$-subrepresentation of $\pi$, and hence also of $V$.  On
the other hand, as a representation of $U$, $V$ is a direct sum of copies of
$\Res^J_U \Lambda.$  By Mackey's induction-restriction formula (\cite{vigbook}, I.5.5), we have:
$$\Res^{\GL_2(F)}_U \pi = \oplus_{JgU} 
\cInd_{U \cap gJg^{-1}}^U (\Res^J_U \Lambda)^g.$$
For each $g$, let $U_g = U \cap gJg^{-1}$.  Then we have
$$\Hom_U(\cInd_{U_g}^U (\Res^J_U \Lambda)^g, \Res^J_U \Lambda) =
\Hom_{U_g}(\Res^J_{U_g} (\Lambda)^g, \Res^J_{U_g} \Lambda),$$
and the latter is nonzero if and only if $g$ intertwines $\Lambda$.  By~\cite{BH}, 15.1, $g$ is 
then an element of $J$.  Thus $\pi$ contains exactly one copy of $\Res^J_U \Lambda$, so
$\Lambda$ must be all of $V$ as required.
\end{proof}

\begin{prop}
Let $A$ be a finite length $W(\overline{\FF}_l)$-algebra $A$, and let $\pi_A$ be a lift of $\pi$
to a representation over $A$.  The map
$$\Lambda_A \mapsto \cInd_J^{\GL_2(F)} \Lambda_A$$
gives a bijection between
$A$-deformations of $\Lambda$ and $A$-deformations of $\pi$.
\end{prop}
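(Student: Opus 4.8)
The plan is to establish the bijection $\Lambda_A \mapsto \cInd_J^{\GL_2(F)} \Lambda_A$ by producing an inverse construction and checking the two composites are the identity. First I would verify that $\cInd_J^{\GL_2(F)} \Lambda_A$ is genuinely an $A$-deformation of $\pi$: it is free over $A$ because $\Lambda_A$ is free over $A$ and compact induction from an open subgroup is ``free'' on coset representatives (so $\cInd_J^{\GL_2(F)} \Lambda_A \cong \bigoplus_{gJ} \Lambda_A$ as $A$-modules, hence $A$-free); its reduction mod $m_A$ is $\cInd_J^{\GL_2(F)} (\Lambda_A/m_A\Lambda_A) \cong \cInd_J^{\GL_2(F)} \Lambda \cong \pi$ by Corollary~\ref{cor:modl} (compact induction commutes with the base change $- \otimes_A A/m_A$); and smoothness is automatic since $\cInd$ of a smooth representation from an open subgroup is smooth. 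So the map is well-defined into $A$-deformations of $\pi$.

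Next I would construct the inverse. Given an $A$-deformation $\pi_A$ of $\pi$, I want to recover $\Lambda_A$ as the subspace $V_A$ of $\pi_A$ on which $U$ acts via a suitable lift of $\Res^J_U \Lambda$. The key point is that $U$ has order prime to $l$ in all three cases (in case~1, $U$ is the pro-$p$ congruence subgroup, and in cases~2,~3 it is $U_{\FA}^{[\frac{n+1}{2}]}$, which is also pro-$p$; since $l \neq p$ these are pro-$l'$), so taking $\Lambda_A$-isotypic parts is exact and commutes with reduction mod $m_A$. More precisely, $\Res^J_U \Lambda$ lifts uniquely to a representation $\Res^J_U \Lambda_A$ of $U$ over $A$ (lift the character components via Teichm\"uller on the prime-to-$l$ part), and I set $V_A := \Hom_U(\Res^J_U \Lambda_A, \pi_A)$ or equivalently the corresponding isotypic summand; exactness of $U$-isotypic projection gives $V_A/m_A V_A \cong V \cong \Lambda$ by the Lemma, and $V_A$ is $A$-free of the same rank as $\Lambda_A$. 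Since $J$ normalizes $\Res^J_U \Lambda_A$ (as it normalizes $\Res^J_U \Lambda$ and the lift is unique), $V_A$ is a $J$-stable summand, giving a $J$-deformation of $\Lambda$.

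Then I would check the two composites. For $\Lambda_A \mapsto \cInd_J^{\GL_2(F)} \Lambda_A \mapsto V_A$: one must see that the $\Res^J_U \Lambda_A$-isotypic part of $\cInd_J^{\GL_2(F)} \Lambda_A$, as a $J$-representation, is exactly $\Lambda_A$. This is the deformation-theoretic analogue of the Lemma, and I would prove it by the same Mackey argument: $\Res^{\GL_2(F)}_U \cInd_J^{\GL_2(F)} \Lambda_A = \bigoplus_{JgU} \cInd_{U_g}^U (\Res^J_U \Lambda_A)^g$, and $\Hom_U$ of a summand into $\Res^J_U \Lambda_A$ is $\Hom_{U_g}((\Lambda_A)^g, \Lambda_A)$, which is nonzero exactly when $g$ intertwines $\Lambda$ (the intertwining condition is detected after reduction mod $m_A$ since $U_g$ has order prime to $l$, so intertwining of $\Lambda_A$ is equivalent to intertwining of $\Lambda$), hence exactly when $g \in J$ by \cite{BH}, 15.1; so the isotypic part is a single copy, namely the image of $\Lambda_A$ itself. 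For the reverse composite $\pi_A \mapsto V_A \mapsto \cInd_J^{\GL_2(F)} V_A$: the inclusion $V_A \hookrightarrow \pi_A$ induces a $\GL_2(F)$-map $\cInd_J^{\GL_2(F)} V_A \to \pi_A$ by Frobenius reciprocity, which reduces mod $m_A$ to the isomorphism $\cInd_J^{\GL_2(F)} \Lambda \cong \pi$; since both sides are $A$-free of the same rank, Nakayama's lemma upgrades this to an isomorphism. I expect the main obstacle to be the Mackey computation for the first composite — specifically, justifying that the intertwining-set computation of \cite{BH}, 15.1 applies verbatim to the lifted representations $\Lambda_A$, which comes down to the prime-to-$l$ order of the relevant unipotent-type subgroups making the relevant $\Hom$-spaces behave well under reduction mod $m_A$.
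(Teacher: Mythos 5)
Your proof is correct and follows essentially the same strategy as the paper: extract $\Lambda_A$ from $\pi_A$ as the $U$-isotypic summand (using that $U$ is pro-$p$ and hence of order prime to $l$), and show that the resulting map $\cInd_J^{\GL_2(F)}\Lambda_A \to \pi_A$ from Frobenius reciprocity is an isomorphism via injectivity-from-flatness and Nakayama's lemma for admissible representations. The main difference is one of explicitness: the paper only verifies the composite ``extract, then induce'' (i.e.\ surjectivity of $\Lambda_A\mapsto\cInd\Lambda_A$) in detail, leaving the other composite implicit, while you spell out both. For the ``induce, then extract'' composite you give a Mackey-theoretic argument (a deformation analogue of the Lemma preceding the proposition); this is a valid route, though it can also be dispatched more cheaply by a rank count: $\Lambda_A$ embeds into the $U$-isotypic summand $\tV$ of $\cInd\Lambda_A$, both are $A$-free of rank $\dim\Lambda$ (the latter because isotypic projection is exact and $\tV/\mm_A\tV\cong V\cong\Lambda$), so they coincide. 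One small terminological quibble: writing $V_A = \Hom_U(\Res^J_U\Lambda_A,\pi_A)$ is not literally the isotypic summand when $\Lambda$ is not a character (cases 1 and 3), but $\Lambda_A\otimes_A\Hom_U(\Lambda_A,\pi_A)$; this doesn't affect the argument.
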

\begin{proof}
Let $\pi_A$ be an $A$-deformation of $\pi$.
Since $U$ is a $p$-group, the representation
$\Res^J_U \Lambda$ lifts {\em uniquely} to $A$; let $\tV$ be the $A$-submodule
of $\pi_A$ on which $U$ acts via this lift.  As an $A$-module $\tV$ is a direct summand of 
$\pi_A$; its reduction mod $l$ is $V$.  Moreover, $\tV$ is stable under $J$, and
therefore defines an irreducible representation $\Lambda_A$ of $J$ lifting
$\Lambda$.  We have a natural map 
$$\cInd_J^{\GL_2(F)} \Lambda_A \rightarrow \pi_A$$
that reduces modulo $m_A$ to the isomorphism
$$\cInd_J^{\GL_2(F)} \Lambda \rightarrow \pi.$$
As $\pi_A$ is a free $A$-module it follows by the long exact sequence for $\Tor$
that this map is injective; on the other hand, Nakayama's lemma for admissible 
representations (see for instance~\cite{emerton}, Lemma 2.7) shows that
the image of this map is all of $\pi_A$.  
\end{proof}

The deformations of $\Lambda$ are not difficult to understand, as $\Lambda$
is ``not far from'' a pro-$p$ group.  We will be most interested in the case in which
$\Lambda$ arises from an {\em admissible pair} (mod $l$) (c.f.~\cite{BH}, 5.19).

\begin{defn} (c.f.~\cite{BH}, 18.2).  An admissible pair (over $\overline{\FF}_l$)
is a pair $(E,\chi)$, where $E$ is a tamely ramified quadratic extension of $F$,
and $\chi: E^{\times} \rightarrow \overline{\FF}_l^{\times}$, such that:
\begin{itemize}
\item $\chi$ does not factor through the norm $N_{E/F}: E^{\times} \rightarrow F^{\times}$.
\item Let $U^1_E$ be the group of units in $\OO_E$ congruent
to $1$ mod $\pp$, where $\pp$ is a uniformizer of $\OO_E$.  If the restriction of $\chi$
to $U^1_E$ factors through $N_{E/F}$, then $E$ is unramified over $F$.
\end{itemize}

Two pairs $(E,\chi)$ and $(E^{\prime},\chi^{\prime})$ are isomorphic if there
is an isomorphism of $j: E \rightarrow E^{\prime}$ such that $\chi^{\prime} \circ j = \chi$.
\end{defn}

We now describe a ``mod $l$'' version of the parametrization of tame cuspidal representations,
which associates a supercuspidal representation to an admissible pair.  We follow~\cite{BH}, 5.19;
little needs to be changed, but we will need the explicit description of the parameterization
in order to properly understand the deformation theory.

Given $(E,\chi)$, we first choose a character $\phi$ of $F^{\times}$ such that
$\chi = (\phi \circ N_{E/F}) \chi^{\prime}$, with $\chi^{\prime}$ a {\em minimal}
character of $E^{\times}$ in the sense of~\cite{BH}, 5.18.  There are then three cases:

\begin{enumerate}
\item $\chi^{\prime}$ has level 0.  In this case $E/F$ is unramified, and the
restriction of $\chi^{\prime}$ to $\OO_E^{\times}$ is inflated from a character
$\theta$ of $\FF_{q^2}^{\times}$, that satisfies $\theta^q \neq \theta$.  The
representation $\pi_{\theta}$ of $\GL_2(\FF_q)$ then inflates to a representation
of $\GL_2(\OO_F)$; we extend this to a representation $\Lambda^{\prime}$ of
$J = F^{\times} \GL_2(\OO_F)$ be letting $F^{\times}$ act via $\chi^{\prime}$.
We take $\Lambda_{\chi} = \Lambda^{\prime} \otimes (\phi \circ \det)$,
and $\pi_{\chi} = \cInd_J^{\GL_2(F)} \Lambda_{\chi}$.

\item $\chi^{\prime}$ has odd level $n$.  Fix a stratum
$(\FA,n,\alpha)$, and let $J = E^{\times}U_{\FA}^{\frac{n+1}{2}}$.  We take
$\Lambda^{\prime}$ to be the character of $J$ whose restriction to $E^{\times}$ is
$\chi$ and whose restriction to $U_{\FA}^{\frac{n+1}{2}}$ is $\psi_{\alpha}$.  As
above, we set $\Lambda_{\chi} = \Lambda^{\prime} \otimes (\phi \circ \det)$, and
$\pi_{\chi} = \cInd_J^{\GL_2(F)} \Lambda_{\chi}$.

\item $\chi^{\prime}$ has even positive level $n = 2m$.  In this case $E/F$ is unramified.
Fix a stratum $(\FA, n, \alpha)$, and let $J = E^{\times}U_{\FA}^m$.
Let $\OO_E^{\times,1}$ be the subgroup of $\OO_E^{\times}$ of units that map to
$1$ in $\FF_{q^2}$, and set $J^1 = \OO_E^{\times,1}U_{\FA}^m$.  In~\cite{BH}, 19.5.4,
Bushnell-Henniart associate to this data an irreducible representation $\eta$ of $J^1$
depending on $\chi$, and show that there is a unique irreducible representation
$\Lambda^{\prime}$ of $J$ such that:
\begin{itemize}
\item $\Lambda^{\prime} | J^1 = \eta$,
\item $\Lambda^{\prime} | F^{\times}$ is a multiple of $\chi | F^{\times}$, and
\item for every root of unity $\zeta$ in $E^{\times} \setminus F^{\times}$, of
order prime to $p$, the trace of $\Lambda^{\prime}(\zeta)$ is $-\chi(\zeta)$.
\end{itemize}
We take $\Lambda_{\chi} = \Lambda^{\prime} \otimes (\phi \circ \det)$,
and $\phi_{\chi} = \cInd_J^{\GL_2(F)} \Lambda_{\chi}$.
\end{enumerate}

\begin{rem} \rm The construction described above is precisely the mod $l$ reduction
of the construction of Bushnell-Henniart; that is, if $(E,\tchi)$ is a characteristic
zero admissible pair whose mod $l$ reduction is $(E,\chi)$, then the characteristic
zero representation $\pi_{\tchi}$ associated to $(E,\tchi)$ by Bushnell-Henniart
reduces mod $l$ to the representation $\pi_{\chi}$ described above.
\end{rem}

It is fairly straightforward to extend this construction to deformations.
We will need the following lemma:

\begin{lemma}
Let $J$ be a locally profinite group and let $J^1$ be a compact normal subgroup.
Let $A$ be a finite length local $W(\overline{\FF}_l)$-algebra, and
let $\eta_A$ be a finite-dimensional representation of $J^1$
over $A$, such that $\eta_A \otimes_A \overline{\FF}_l$ is irreducible.
Suppose $\eta_A$ extends to an $A$-representation $\Lambda_A$ of $J$.
Then any other extension $\Lambda^{\prime}_A$ of $\eta_A$ to
a representation of $J$ differs from $\Lambda_A$ by twisting
by a character of $J/J^1$.
\end{lemma}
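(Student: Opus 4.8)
The plan is to reduce the statement to the standard fact that two extensions of a representation of a normal subgroup to the whole group differ by a character of the quotient, being careful that we are working over a ring $A$ rather than a field. First I would form the representation $\Lambda^{\prime}_A \otimes_A \Lambda_A^{\vee}$, or — to avoid dualizing an arbitrary $A$-module — consider instead the $A$-module $\Hom_A(\Lambda_A, \Lambda^{\prime}_A)$ with its natural $J$-action. Since $\Lambda_A$ and $\Lambda^{\prime}_A$ both restrict to $\eta_A$ on $J^1$, this $J$-action restricted to $J^1$ lands in $\Hom_{J^1}(\eta_A, \eta_A)$, and the key point is to compute this endomorphism algebra.

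The main step is to show $\End_{J^1}(\eta_A) = A$. Here I would use that $\eta_A \otimes_A \overline{\FF}_l$ is absolutely irreducible over $\overline{\FF}_l$, so $\End_{J^1}(\eta_A \otimes_A \overline{\FF}_l) = \overline{\FF}_l$; then an inductive argument up the filtration of $A$ by powers of its maximal ideal $m_A$ (using that $A$ has finite length, and the exactness properties one gets because $J^1$ is compact, so averaging arguments or direct Nakayama-type reasoning apply) upgrades this to $\End_{J^1}(\eta_A) = A$. Concretely, one shows any $J^1$-endomorphism of $\eta_A$ reduces mod $m_A$ to a scalar, subtracts off (the image of) that scalar in $A$, and iterates; since $m_A$ is nilpotent this terminates. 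Granting this, $\Hom_{J^1}(\eta_A, \eta_A) = A$ as a $J$-module, with $J$ acting through $J/J^1$ via some homomorphism to $A^{\times} = \Aut_A(A)$; call this character $\delta: J/J^1 \to A^{\times}$.

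Finally, the identity endomorphism of $\eta_A$ gives a distinguished element $\id \in \Hom_{J^1}(\eta_A,\eta_A) = A$, and comparing the two $J$-actions shows precisely that, for $g \in J$, the composite $\Lambda^{\prime}_A(g) \circ \Lambda_A(g)^{-1}$ acts on $\eta_A$ as the scalar $\delta(g) \in A^{\times}$; equivalently $\Lambda^{\prime}_A(g) = \delta(g)\,\Lambda_A(g)$ for all $g \in J$, which is exactly the assertion that $\Lambda^{\prime}_A$ and $\Lambda_A$ differ by the character $\delta$ of $J/J^1$. One checks $\delta$ is a homomorphism and is continuous (it is trivial on $J^1$, which is open in the relevant pro-$p$-or-compact subgroup). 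The main obstacle is the endomorphism-ring computation $\End_{J^1}(\eta_A) = A$: over a general Artinian local $W(\overline{\FF}_l)$-algebra this is not automatic from absolute irreducibility of the special fiber, and one must run the filtration argument carefully, which is why the hypothesis that $\eta_A\otimes_A\overline{\FF}_l$ be irreducible (rather than merely $\eta_A$ being "irreducible" in some looser sense) is essential.
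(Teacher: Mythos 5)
Your proof is correct and is essentially the same argument as the paper's, which fixes a $J^1$-equivariant identification of the underlying spaces, considers $\Lambda_A(\tau)^{-1}\Lambda'_A(\tau)$, applies Schur's lemma to see it is a scalar $c_\tau \in A^\times$, and checks $\tau \mapsto c_\tau$ is a character of $J/J^1$. The one place you are more careful is the Schur step: the paper simply invokes ``Schur's lemma'' for $\End_{J^1}(\eta_A) = A$, whereas you correctly observe that over an Artinian local $A$ this requires the filtration argument (reduce mod $m_A$, use absolute irreducibility of $\eta_A \otimes_A \overline{\FF}_l$, subtract the scalar, and iterate using nilpotence of $m_A$ --- equivalently, deduce surjectivity of $A[J^1]\to\End_A(\eta_A)$ from Burnside plus Nakayama), which is indeed what justifies the paper's appeal to Schur.
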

\begin{proof}
Let $V$ and $V^{\prime}$ be the representation spaces of $\Lambda$ and
$\Lambda^{\prime}$ respectively; fix a $J^1$-equivariant isomorphism
between them, so that we can consider $\Lambda_A(\tau)$ and $\Lambda^{\prime}_A(\tau)$
as elements of $\End(V)$ for any $\tau \in J$.  Consider the element
$\Lambda_A(\tau)^{-1}\Lambda^{\prime}_A(\tau)$ of $\End(V)$.  This element
commutes with $\Lambda_A(\sigma)$ for any $\sigma \in J^1$, and hence
is an $\eta_A$-equivariant endomorphism of $V$.  By Schur's lemma
$\Lambda_A(\tau)^{-1}\Lambda^{\prime}_A(\tau)$ is equal to
$c_{\tau}$ times the identity for a scalar $c_{\tau} \in A^{\times}$.
One verifies easily that $c_{\tau} = 1$ if $\tau \in J^1$ and
that $c_{\tau\tau^{\prime}} = c_{\tau} c_{\tau^{\prime}}$ for all
$\tau$, $\tau^{\prime}$.  In particular $\tau \mapsto c_{\tau}$ is
a character of $J/J^1$, and twisting $\Lambda_A$ by this character yields
$\Lambda^{\prime}_A$.
\end{proof}

\begin{thm} \label{thm:admissible}
Let $(E,\chi)$ be a (mod $l$) admissible pair.  Then for any
finite length $W(\overline{\FF}_l)$-algebra $A$, there are natural
bijections between $A$-deformations $\chi_A$ of $\chi$, deformations
$\Lambda_{\chi,A}$ of $\Lambda_{\chi}$, and deformations $\pi_{\chi,A}$
of $\pi_{\chi}$.
\end{thm}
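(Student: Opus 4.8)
The plan is to break the asserted chain of bijections into two links and treat the harder one case by case. The link between $A$-deformations of $\Lambda_{\chi}$ and $A$-deformations of $\pi_{\chi}$ requires nothing new: it is exactly the proposition proved above, applied with $\pi = \pi_{\chi}$ and $\Lambda = \Lambda_{\chi}$. So the whole problem is to produce, naturally in the finite length $W(\overline{\FF}_l)$-algebra $A$, a bijection between $A$-deformations $\chi_A$ of $\chi$ and $A$-deformations $\Lambda_{\chi,A}$ of $\Lambda_{\chi}$. I would do this by carrying out the three-case construction of $\Lambda_{\chi}$ from $\chi$ over $A$ and checking reversibility in each case.

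The common mechanism: in each case $J$ contains a closed normal pro-$p$ subgroup $J^{1}$ --- namely $\ker(\GL_{2}(\OO_{F}) \to \GL_{2}(\FF_{q}))$ in case (1), $U_{\FA}^{(n+1)/2}$ in case (2), $\OO_{E}^{\times,1}U_{\FA}^{m}$ in case (3) --- on which $\Lambda_{\chi}$ equals one of the fixed ingredients of the construction. Since $p \neq l$ and $A$ has finite length, any continuous $\overline{\FF}_l$-representation of a pro-$p$ group has a \emph{unique} deformation over $A$ (the values on pro-$p$ elements are forced into Teichm\"uller lifts of $p$-power roots of unity), so $\Lambda_{\chi}|_{J^{1}}$ has a unique deformation $\eta_{A}$; for the same reason $\chi_A$ is forced on the pro-$p$ part of $E^{\times}$ and on every prime-to-$l$ subquotient, so the deformations of $\chi$ form a torsor under $\Hom(\overline{E^{\times}}, 1+m_{A})$, where $\overline{E^{\times}}$ is the quotient of $E^{\times}$ by the subgroup on which $\chi_A$ is forced. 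By the lemma just proved, the extensions of $\eta_{A}$ to $J$ --- which are exactly the deformations of $\Lambda_{\chi}$ --- form, once nonempty, a torsor under $\Hom(J/J^{1}, 1+m_{A})$, and one checks that $J/J^{1}$ and $\overline{E^{\times}}$ have the same group of $1+m_{A}$-valued characters (in cases (1) and (3), $J/J^{1} \cong \pi_{E}^{\ZZ} \times \FF_{q^{2}}^{\times}$ matches $\overline{E^{\times}}$; case (2) is similar). The canonical matching of these two torsors is then pinned down by the normalization conditions defining $\Lambda_{\chi}$ from $\chi$: the action of $F^{\times}$ by scalars and the value of $\tr \Lambda_{\chi,A}$ on the prime-to-$p$ roots of unity of $E^{\times}$. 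Case (2), where $\Lambda_{\chi,A}$ is merely a character of $J$, is immediate from this, the overlap compatibility on the pro-$p$ group $E^{\times}\cap U_{\FA}^{(n+1)/2}$ being automatic. Case (1) needs one further input: there $\Lambda_{\chi}|_{J^{1}}$ is a character but deforming $\Lambda_{\chi}$ also involves deforming the inflated $\GL_{2}(\FF_{q})$-block $\pi_{\theta}$ (with $\theta^{q} \neq \theta$ by admissibility, so $\pi_{\theta}$ is supercuspidal), and Theorem~\ref{thm:lvl0} identifies $A$-deformations of $\pi_{\theta}$ with $A$-deformations of $\theta$, which --- being forced to remain inflated from $\FF_{q^{2}}^{\times}$ --- are the same as $A$-deformations of $\chi'|_{\OO_{E}^{\times}}$; adjoining the free value at a uniformizer of $F$ and the twist by $\phi$ recovers $\chi_A$.

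I expect the main obstacle to be case (3), the even positive level case, where $\Lambda'$ is not given by a formula but characterised (following Bushnell--Henniart, 19.5.4) by: its restriction to $J^{1}$ is the Heisenberg-type representation $\eta$, its restriction to $F^{\times}$ is a multiple of $\chi|_{F^{\times}}$, and its trace on prime-to-$p$ roots of unity $\zeta \in E^{\times}\setminus F^{\times}$ is $-\chi(\zeta)$. The substantive claim is that this characterization survives base change: for every $\chi_A$ there is a deformation $\Lambda'_{A}$ of $\Lambda'$ restricting to the unique lift $\eta_{A}$, with $\Lambda'_{A}|_{F^{\times}}$ a multiple of $\chi_A|_{F^{\times}}$ and $\tr \Lambda'_{A}(\zeta) = -\chi_A(\zeta)$, and it is unique and varies bijectively with $\chi_A$. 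Uniqueness is a Schur-type argument: twisting $\Lambda'_{A}$ by a character $\xi$ of $J/J^{1}$ multiplies these traces by $\xi$ and scales the $F^{\times}$-action by $\xi|_{F^{\times}}$, and since $-\chi_A(\zeta) \in A^{\times}$ the only $\xi$ preserving everything is trivial. Existence --- that $\eta_{A}$ extends to $J$ at all, and in a way hitting the prescribed traces --- is the real work: one either mimics Bushnell--Henniart's construction over the base $A$, showing that the obstruction to extending $\eta_{A}$ (which lies in $H^{2}(J/J^{1}, 1+m_{A})$ and vanishes modulo $m_{A}$) in fact vanishes, or equivalently verifies directly that the universal deformation rings $R^{\univ}_{\chi}$ and $R^{\univ}_{\Lambda_{\chi}}$ are isomorphic, after which the torsor argument above upgrades to a natural bijection on $A$-points. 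Cases (1) and (2) are comparatively routine given Theorem~\ref{thm:lvl0} and the extension lemma.
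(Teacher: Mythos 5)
Your overall structure is the same as the paper's: you reduce to the link between deformations of $\chi$ and deformations of $\Lambda_{\chi}$ via the proposition already established, and you handle cases (1) and (2) exactly as the paper does, using the uniqueness of deformations over pro-$p$ groups and (for case (1)) Theorem~\ref{thm:lvl0}. Your torsor picture --- deformations of $\Lambda_{\chi}$, once nonempty, form a torsor under characters of $J/J^{1}$ into $1+m_A$, matching the torsor of deformations of $\chi$ --- is also the right skeleton for case (3).

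However, in case (3) you leave a genuine gap: you correctly identify that the real content is \emph{existence} of an extension of $\eta_A$ to $J$ meeting Bushnell--Henniart's trace normalization, but you only sketch two candidate routes (an $H^2$ obstruction computation, or a direct comparison of $R^{\univ}_{\chi}$ and $R^{\univ}_{\Lambda_{\chi}}$) without carrying out either, and both are more delicate than what is actually needed. The paper sidesteps the obstruction question entirely with a simple maneuver you should note: since $\chi'$ has values in $\overline{\FF}_l^{\times}$ it has a Teichm\"uller lift $\tchi'$ to a $W(\overline{\FF}_l)^{\times}$-valued character, so $(E,\tchi')$ is a characteristic-zero admissible pair and Bushnell--Henniart's own construction supplies a representation $\tLambda'$ of $J$ over $W(\overline{\FF}_l)$ lifting $\Lambda'$. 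Base-changing $\tLambda'$ to $A$ immediately furnishes \emph{one} extension of the (unique) $\eta_A$, so the torsor is nonempty with no cohomology to compute; the lemma then says every deformation of $\Lambda'$ is a twist of $\tLambda'\otimes_{W(\overline{\FF}_l)}A$ by a character of $J/J^{1}$, and assigning to $\chi'_A$ the twist by $\chi'_A(\tchi')^{-1}$ gives the bijection, independently of the choice of $\tchi'$. That construction is what you are missing; without it (or a fully worked obstruction argument in its place) your case (3) is an outline rather than a proof.
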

\begin{proof}
We have already constructed the bijection between deformations of $\Lambda_{\chi}$
and $\pi_{\chi}$, so it remains to relate these to deformations of $\chi$.  We proceed case-by-case.
Fix a $\chi^{\prime}$ and $\phi$ such that $\chi = \chi^{\prime} (\phi \circ N_{E/F})$ with
$\chi^{\prime}$ minimal.
\begin{enumerate}
\item Suppose that $\chi^{\prime}$ has level zero.  Given a lift $\chi^{\prime}_A$
of $\chi^{\prime}$, the restriction of $\chi^{\prime}_A$ to $\OO_E^{\times}$ is 
inflated from a character $\theta_A$
of $\FF_{q^2}^{\times}$ lifting $\theta$.  Then $\pi_{\theta_A}$ is an $A$-deformation
of $\pi_{\theta}$; inflating it to $\GL_2(\OO_F)$ and extending it to
$F^{\times} \GL_2(\OO_F)$ by letting $F^{\times}$ act by $\chi^{\prime}_A$ gives
a lift $\Lambda^{\prime}_A$ of $\Lambda^{\prime}$.  Conversely, given a lift
$\Lambda^{\prime}_A$ of $\Lambda^{\prime}$, the restriction of $\Lambda^{\prime}_A$
to $\GL_2(\OO_F)$ is inflated from an $A$-deformation $\pi_{\theta,A}$ of $\pi_{\theta}$;
by Theorem~\ref{thm:lvl0} $\pi_{\theta,A}$ arises from a (uniquely determined)
character $\theta_A$ of $\FF_{q^2}^{\times}$ lifting $\theta$.  Let $\chi^{\prime}_A$
be the unique $A$-deformation of $\chi^{\prime}$ that is inflated from $\theta_A$
on $\OO_E^{\times}$ and agrees with $\Lambda^{\prime}_A$ on $F^{\times}$.  This
recovers $\chi^{\prime}_A$ from $\Lambda^{\prime}_A$ and gives a bijection
between deformations of $\chi^{\prime}$ and $\Lambda^{\prime}$; twisting
we obtain the desired bijection between deformations of $\chi$ and deformations
of $\Lambda_{\chi}$.

\item If $\chi^{\prime}$ has odd level $n$, then $\Lambda^{\prime}$ is a
character of $E^{\times} U_{\FA}^{\frac{n+1}{2}}$ whose restriction to $E^{\times}$ 
is $\chi^{\prime}$, and whose restriction to the pro-$p$ group $U_{\FA}^{\frac{n+1}{2}}$
is a fixed character $\psi_{\alpha}$.  Given a deformation $\chi^{\prime}_A$
of $\chi^{\prime}$ we let $\Lambda^{\prime}_A$ be the character
whose restriction to $E^{\times}$ is $\chi^{\prime}_A$ and whose restriction
to $U_{\FA}^{\frac{n+1}{2}}$ is the unique lift of $\psi_{\alpha}$ to $A$.
This manifestly yields a bijection between deformations of $\chi^{\prime}$ and
deformations of $\Lambda^{\prime}$; twisting we obtain a bijection between
deformations of $\chi$ and deformations of $\Lambda_{\chi}$.

\item Suppose $\chi^{\prime}$ has even level $2m$.  Then $\Lambda^{\prime}$
is a representation of $J = E^{\times} U_{\FA}^m$; its restriction
to the normal subgroup $J^1$ of $J$ given by $J^1 = \OO_E^{\times,1} U_{\FA}^m$
is an irreducible representation $\eta$.  If we lift $\chi^{\prime}$ to a
character $\tchi^{\prime}$ with values in $W(\overline{\FF}_l)^{\times}$ (which
we can always do), then $(E,\tchi^{\prime})$ is an admissible pair, to which
Bushnell-Henniart associate a representation $\tLambda^{\prime}$ of $J$
over $W(\overline{\FF}_l)$ lifting $\Lambda^{\prime}$.

Suppose we have an $A$-deformation
$\Lambda^{\prime}_A$ of $\Lambda^{\prime}$.  The restriction of $\Lambda^{\prime}_A$
to $J^1$ is the unique $A$-deformation $\eta_A$ of $\eta$, since $J^1$ is a
pro-$p$ group.  By the lemma above, $\Lambda^{\prime}_A$ is a twist of
$\tLambda^{\prime} \otimes_{W(\overline{\FF}_l)} A$ by a uniquely determined character
of $J/J^1$.

Now let $\chi^{\prime}_A$ be an $A$-deformation of $\chi^{\prime}$.  Define
$\Lambda^{\prime}_A$ to be the twist of $\tLambda^{\prime}_A \otimes_{W(\overline{\FF}_l)} A$
by the character $\chi^{\prime}_A (\tchi^{\prime})^{-1}$.  This gives a bijection
between deformations of $\chi^{\prime}$ and deformations of $\Lambda^{\prime}$, and
is independent of the choice of $\tchi^{\prime}$.  As usual, twisting yields the
desired bijection between deformations of $\chi$ and deformations of $\Lambda$.
\end{enumerate}
\end{proof}

If $E/F$ is ramified, there is an even simpler classification of deformations of $\pi_{\chi}$:
\begin{prop} Let $(E,\chi)$ is an admissible pair with $E/F$ ramified, and let $\phi$ be
its restriction to $F^{\times}$.  Then restriction to $F^{\times}$ is a bijection
between $A$-deformations of $\chi$ and $A$-deformations of $\phi$.  In 
particular giving an $A$-deformation of
$\pi_{\chi}$ is equivalent to giving an $A$-deformation of its central character.
\end{prop}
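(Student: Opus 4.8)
The plan is to analyze the obstruction to extending a deformation of $\phi$ to a deformation of $\chi$, and to show it vanishes precisely because $E/F$ is ramified. First I would recall the structure of $E^\times$ when $E/F$ is ramified: if $\varpi_E$ is a uniformizer of $E$, then $E^\times = \varpi_E^{\ZZ} \times \mu \times U^1_E$, where $\mu$ is the group of roots of unity of order prime to $p$ in $E^\times$ (which, since $E/F$ is ramified, is exactly $\mu_{q-1}$, i.e.\ the Teichm\"uller lift of $\FF_q^\times$, hence lies in $F^\times$), and $U^1_E$ is a pro-$p$ group. The point is that $E^\times / F^\times$ is, up to the pro-$p$ part $U^1_E / U^1_F$, generated by the image of $\varpi_E$, and $\varpi_E^2$ lies in $F^\times$ (up to a unit). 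So $E^\times$ is generated over $F^\times$ by $\varpi_E$ together with a pro-$p$ group.

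Next I would carry out the deformation-theoretic step. Given an $A$-deformation $\phi_A$ of $\phi$, I want a unique $A$-deformation $\chi_A$ of $\chi$ restricting to $\phi_A$ on $F^\times$. On the pro-$p$ group $U^1_E$, the character $\chi|_{U^1_E}$ lifts uniquely to an $A$-valued character $\chi_A^{(1)}$ (a character of a pro-$p$ group into $1 + m_A$, hence into $A^\times$, lifts uniquely since $1 + m_A$ is an $l$-group and $U^1_E$ is pro-$p$ with $p \neq l$); and this unique lift must agree with $\phi_A$ on $U^1_F = U^1_E \cap F^\times$ because the unique lift of $\phi|_{U^1_F}$ is forced. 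On the finite prime-to-$p$ part $\mu \subset F^\times$, the value $\chi_A$ is forced to be $\phi_A$. It remains to specify $\chi_A(\varpi_E)$: the constraint is $\chi_A(\varpi_E)^2 = \chi_A(\varpi_E^2) = \phi_A(u \varpi_E^2)$ for the appropriate unit, i.e.\ $\chi_A(\varpi_E)$ must be a square root in $A^\times$ of a prescribed element reducing to $\chi(\varpi_E)^2 \bmod m_A$. Since $\chi(\varpi_E) \in \overline{\FF}_l^\times$ and $l$ is odd, squaring is \'etale on $1 + m_A$ (indeed $2 \in A^\times$), so there is a \emph{unique} square root lifting $\chi(\varpi_E)$; this pins down $\chi_A(\varpi_E)$. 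One checks these assignments are compatible (the relations defining $E^\times$ as an extension of $F^\times$ by $\varpi_E$ and $U^1_E$ are respected, using commutativity of $E^\times$), giving a well-defined $\chi_A$; uniqueness is built into each step. Finally, $\chi_A \mapsto \chi_A|_{F^\times}$ is inverse to this construction, giving the claimed bijection.

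For the last sentence, I would combine this with Theorem~\ref{thm:admissible}: deformations of $\pi_\chi$ are in natural bijection with deformations of $\chi$, which we have just identified with deformations of $\phi = \chi|_{F^\times}$; and the central character of $\pi_\chi$ is $\chi|_{F^\times} = \phi$ (up to the normalization coming from the twist by $\phi \circ \det$, which one tracks through the construction), so a deformation of $\pi_\chi$ is the same as a deformation of its central character.

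The main obstacle I anticipate is bookkeeping rather than conceptual: correctly handling the twist by $(\phi \circ \det)$ built into the definition of $\pi_\chi$ and $\Lambda_\chi$, and making sure the identification of the central character is on the nose with the right normalization. The genuinely delicate mathematical input is the odd-prime hypothesis: it is exactly what makes the square-root lift $\chi_A(\varpi_E)$ unique (for $l = 2$ one would have a choice, or an obstruction), and this is where ramifiedness of $E/F$ enters — in the unramified case $\varpi_E \in F^\times$ already and there is no extra generator, but then $E^\times/F^\times$ has prime-to-$p$ torsion coming from $\mu_{q^2-1}/\mu_{q-1}$ which can have order divisible by $l$ when $l \mid q+1$, genuinely enlarging the deformation problem.
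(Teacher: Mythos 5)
Your proof is correct and takes essentially the same approach as the paper: both decompose $E^\times$ via the prime-to-$p$ roots of unity (which lie in $F^\times$ because $E/F$ is ramified), the pro-$p$ group $U_E^1$ (on which the lift is unique since $p\neq l$), and a uniformizer $\varpi_E$ with $\varpi_E^2 \in F^\times\OO_E^\times$, then use oddness of $l$ to get the unique compatible square root determining $\chi_A(\varpi_E)$. The only difference is that the paper leaves the ``in particular'' clause implicit as a consequence of Theorem~\ref{thm:admissible}, which you spell out.
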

\begin{proof}
Let $\phi_A$ be an $A$-deformation of $\phi$.  The group $\OO_E^{\times}$ is generated
by the roots of unity in $E^{\times}$ (all of which lie in $F^{\times}$), and the pro-$p$
group $\OO_E^{\times,1}$.  Thus $\phi_A$ extends uniquely to a character of
$F^{\times} \OO_E^{\times}$ that lifts $\chi$.  If $\pp$ is a uniformizer of $\OO_E$, then $\pp$ and
$F^{\times} \OO_E^{\times}$ generated $E^{\times}$, and $\pp^2$
lies in $F^{\times} \OO_E^{\times}$, so an extension of $\phi_A$ to a character $\chi_A$ 
of $E^{\times}$ lifting $\chi$ is determined uniquely by a choice of $\phi_A(\pp)$
compatible with the (already determined) value of $\phi_A(\pp^2)$.  In particular
$\phi_A(\pp)$ must be a square root of $\phi_A(\pp)^2$ that reduces to $\chi_A(\pp)$
modulo $l$; as $l$ is odd there is a unique choice of $\phi_A(\pp)$.  Thus
$\phi_A$ extends uniquely to a character $\chi_A$ of $E^{\times}$ lifting $\chi$, as required.
\end{proof}

\begin{rem} \rm We will also be concerned with the deformations of supercuspidal
representations $\pi$ over $\overline{\FF}_l$ that correspond via mod $l$ local
Langlands to primitive representations- that is, to representations of $G_F$ that
are not induced from characters.  These do not come from admissible pairs
via the above construction.  On the other hand,
they only occur when $q$ is a power of $2$, and have a very specific form:
by the construction in the proof of~\cite{BH}, Theorem 50.3, they arise (up to a twist) 
from ramified simple 
strata of odd level; that is, up to twisting every such $\pi$ is of the 
form $\cInd_J^{\GL_2(F)} \Lambda$,
where $\Lambda$ is a character of $J$, $J = E^{\times} U_{\FA}^{\frac{n+1}{2}}$
for some stratum $(U,n,\alpha)$, and $E/F$ is {\em ramified}.  
Deformations of such $\pi$ are in bijection
with deformations of $\Lambda$ and (by the same argument as in case 2 of 
Theorem~\ref{thm:admissible}) therefore in bijection with deformations of
the restriction of $\Lambda$ to $E^{\times}$.  The above proposition then shows that
such deformations are determined by their restriction to $F^{\times}$.
It follows that in this case deforming $\pi$ is equivalent to deforming the central 
character of $\pi$.
\end{rem}

Finally, we will need to understand the deformation theory of a particular
admissible representation that is cuspidal but not supercuspidal.  Such
representations only occur when $q$ is congruent to $-1$ mod $l$, and
are equal to a twist by $(\phi \circ \det)$ of the ``Weil representation''
$\pi(1)$ described in~\cite{vigbook}, II.2.5.  From our perspective,
$\pi(1)$ is most conveniently characterised as follows: let $\pi_1$
be the cuspidal representation of $\GL_2(\FF_q)$ associated to the
trivial character of $\FF_{q^2}^{\times}$, and let $\Lambda$ be
the representation of $J = F^{\times} \GL_2(\OO_F)$ such that
$F^{\times}$ acts trivially and $\GL_2(\OO_F)$ acts via $\pi_1$.  
Then $\pi(1)$ is given by $\Ind_J^{\GL_2(F)} \Lambda$.

By Theorem~\ref{thm:cusp1}, the universal deformation
of $\pi_1$ is defined over the ring $R = W(\overline{\FF}_l)[[t]]/Q(t)$.
Define a representation $\Lambda^{\univ}$ of $J$ over
$R[[x]]$ for which $F^{\times}$
acts via the unramified character that takes a uniformizer
of $\OO_F$ to $1 + x$, and $\GL_2(\OO_F)$ acts
via the universal deformation of $\pi_1$.

\begin{prop} \label{prop:cusp2}
The ring $R_{\pi(1)}^{\univ}$ is isomorphic 
to $W(\FF_l)[[x,t]]/Q(t)$.  The univeral representation
$\pi^{\univ}$ is given by $\Ind_J^{\GL_2(F)} \Lambda^{\univ}$.
Under these identifications, the central character of $\rho_{\pi(1)}^{\univ}$
is the unramified character that takes a uniformizer of $\OO_F$ to $1 + x$.
Moreover, if $U$ is the kernel of the map $\GL_2(\OO_F) \rightarrow \GL_2(\FF_q)$,
then $\GL_2(\FF_q)$ acts on the $U$-invariants of $\rho_{\pi(1)}^{\univ}$ by the
deformation $\pi_{1,t}$ of $\pi_1$ over $R_{\pi(1)}^{\univ}$.
\end{prop}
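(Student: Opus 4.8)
The plan is to combine the description of $\pi(1) = \Ind_J^{\GL_2(F)}\Lambda$ with the bijection between deformations of an induced cuspidal representation and deformations of its type (the Proposition preceding Theorem~\ref{thm:admissible}), together with the explicit computation of $R_{\pi_1}^{\univ}$ in Theorem~\ref{thm:cusp1}. Since $\pi(1)$ is cuspidal, the same argument as in the Proposition on deformations of $\Lambda$ (with $U$ the kernel of $\GL_2(\OO_F)\to\GL_2(\FF_q)$) applies verbatim: the functor $\Lambda_A \mapsto \cInd_J^{\GL_2(F)}\Lambda_A$ is a bijection between $A$-deformations of $\Lambda$ and $A$-deformations of $\pi(1)$. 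The only point to check is that cuspidality (rather than supercuspidality) suffices here; this is fine because the key input — that $\pi(1)$ contains exactly one copy of $\Res^J_U\Lambda$ — follows from the same Mackey/intertwining computation, using that $\pi_1$ is a genuine cuspidal (hence does not occur in a principal series restricted to $U$ more than once) and that $\Lambda$ is still intertwined only by elements of $J$. So I would first record this bijection as reducing the problem to computing $R_\Lambda^{\univ}$.

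Next I would compute $R_\Lambda^{\univ}$. Here $J = F^\times\GL_2(\OO_F)$ and $\Lambda$ is the representation on which $F^\times$ acts trivially and $\GL_2(\OO_F)$ acts through $\pi_1$ (inflated along $\GL_2(\OO_F)\to\GL_2(\FF_q)$). A deformation $\Lambda_A$ restricts to a deformation of the $\GL_2(\FF_q)$-representation $\pi_1$ — which by Theorem~\ref{thm:cusp1} is classified by $R = W(\overline{\FF}_l)[[t]]/Q(t)$, i.e.\ by a choice of $\alpha\in A$ with $Q(\alpha)=0$ — and to a lift of the trivial character of $F^\times$. Because $\GL_2(\OO_F)$ and $F^\times$ generate $J$ with $F^\times\cap\GL_2(\OO_F) = \OO_F^\times$ sitting inside $\GL_2(\OO_F)$ as scalars, an extension of the $\GL_2(\OO_F)$-deformation to $J$ amounts to choosing a lift of the character of $F^\times$ compatible with the already-determined central character on $\OO_F^\times$; since $\pi_1$ has trivial central character, any lift of the trivial character of $F^\times$ restricts trivially to $\OO_F^\times$ (the prime-to-$l$ torsion in $\OO_F^\times$ has a unique lift, namely the trivial one, and $\OO_F^{\times,1}$ is pro-$p$ so also lifts uniquely to the trivial character), so the obstruction to extending vanishes and the remaining freedom is exactly a lift of the unramified quotient $F^\times/\OO_F^\times \cong \ZZ$ — i.e.\ a choice of $\Lambda_A$-value $1+x$ on a uniformizer, $x\in m_A$. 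Thus $R_\Lambda^{\univ}\cong W(\overline{\FF}_l)[[x,t]]/Q(t)$, with $\Lambda^{\univ} = \Lambda^{\univ}$ as defined just above the Proposition, and the identification of the central character (unramified, sending a uniformizer of $\OO_F$ to $1+x$, using that $\pi_1$ has trivial central character so $\det$ on $\GL_2(\OO_F)$ contributes nothing to the center beyond scalars) follows immediately.

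The last two assertions are then formal: transporting along the bijection $\Lambda_A\mapsto \cInd_J^{\GL_2(F)}\Lambda_A$ gives $R_{\pi(1)}^{\univ}\cong R_\Lambda^{\univ}\cong W(\overline{\FF}_l)[[x,t]]/Q(t)$ with $\pi^{\univ} = \Ind_J^{\GL_2(F)}\Lambda^{\univ}$ (noting $\Ind = \cInd$ here since $J$ is open and $J\backslash\GL_2(F)$-cosets are handled as in the Proposition); the central character of the universal representation equals that of $\Lambda^{\univ}$, which is the stated unramified character; and the $U$-invariants of $\Ind_J^{\GL_2(F)}\Lambda^{\univ}$ recover, as a $\GL_2(\FF_q) = \GL_2(\OO_F)/U$-representation, precisely the $U$-invariants of $\Lambda^{\univ}$, which by construction is the universal deformation $\pi_{1,t}$ of $\pi_1$ over $R_{\pi(1)}^{\univ}$ (inflated/base-changed appropriately). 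The main obstacle is the first step — verifying carefully that the type-theoretic bijection of the earlier Proposition survives for the merely-cuspidal $\pi(1)$, in particular that the Mackey-theoretic multiplicity-one statement ``$\pi(1)$ contains exactly one copy of $\Res^J_U\Lambda$'' still holds; everything after that is bookkeeping with the structure of $J$ and the already-established computation of $R_{\pi_1}^{\univ}$.
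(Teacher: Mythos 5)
Your proof is correct and follows the same route as the paper's: reduce to deforming the type $\Lambda$ (the supercuspidal argument carries over since the intertwining of the level-zero cuspidal type is still controlled by~\cite{BH}, 15.1), then split the type deformation into a deformation of $\pi_1$ — handled by Theorem~\ref{thm:cusp1} — and a lift of the trivial character of $F^\times$. One small point to make explicit: the compatibility on $\OO_F^\times$ between the $F^\times$-character and the central character of the $\pi_1$-deformation is automatic only because $q \equiv -1 \pmod l$ with $l$ odd forces $l \nmid q-1$, so $\OO_F^\times$ has no nontrivial character to $1 + m_A$ lifting the trivial one; your parenthetical ``the prime-to-$l$ torsion in $\OO_F^\times$'' tacitly assumes this, and it should be stated.
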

\begin{proof}
The same argument as in the supercuspidal case shows that giving an
$A$-deformation of $\pi(1)$ is the same as giving an $A$-deformation
of $\Lambda$.  It thus suffices to show that $\Lambda^{\univ}$ is
the universal deformation of $\Lambda$.  By the same argument as
in the level 0 case of Theorem~\ref{thm:admissible}, deforming
$\Lambda$ is the same as giving a deformation of its (trivial) central 
character and a deformation of $\pi_1$.  The representation $\Lambda^{\univ}$
is clearly universal for such a pair of deformations.  Moreover, the
$U$-invariants of $\Lambda^{\univ}$ are by construction $\pi_{1,t}$.
\end{proof}

\section{Representations of $G_F$} \label{sec:galois}
We now turn to the Galois side of the local Langlands correspondence.  We begin 
by studying the first-order deformations of two-dimensional representations 
of $G_F$ over $\overline{\FF}_l$.  Let $\tomega$ be the cyclotomic character
of $G_F$ with values in $W(\FF_l)$, and let $\omega$ be its reduction mod $l$. 

\begin{lemma} \label{lemma:h1}
Let $\rho$ be an irreducible representation of $G_F$ over
$\overline{\FF}_l$.  Suppose $q$ is not congruent to $1$ mod $l$.  Then:
\begin{itemize}
\item $H^1(G_F, \rho)$ is one-dimensional if $\rho$ is trivial
or $\rho = \omega$, and is zero otherwise.
\item $H^2(G_F, \rho)$ is one-dimensional if $\rho = \omega$
and is zero otherwise.
\end{itemize}
If $q$ is congruent to $1$ mod $l$, then:
\begin{itemize}
\item $H^1(G_F, \rho)$ is two-dimensional if $\rho$ is trivial
and is zero otherwise.
\item $H^2(G_F, \rho)$ is one-dimensional if $\rho$ is trivial
and is zero otherwise.
\end{itemize}
\end{lemma}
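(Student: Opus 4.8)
The plan is to deduce everything from local Tate duality together with Tate's local Euler characteristic formula, once one observes that the two cases in the statement are precisely the cases in which the mod $l$ cyclotomic character $\omega$ is nontrivial, respectively trivial. Since only the $\overline{\FF}_l$-dimensions of the groups $H^i(G_F,\rho)$ are at issue, I would first reduce to the situation where $\rho$ is defined over a finite extension $k$ of $\FF_l$ inside $\overline{\FF}_l$; then $\rho$ is a finite $G_F$-module of order prime to $p$, so Tate's theorems apply directly, and extending scalars back to $\overline{\FF}_l$ changes no dimensions.

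The first step is to compute $H^0$ and $H^2$. One has $H^0(G_F,\rho)=\rho^{G_F}$, and since $\rho$ is irreducible a nonzero invariant vector forces $\rho$ to be the trivial character; so $H^0(G_F,\rho)$ is one-dimensional if $\rho$ is trivial and zero otherwise. By local Tate duality, $H^2(G_F,\rho)$ is dual to $H^0(G_F,\rho^{\vee}\otimes\omega)$, where $\rho^{\vee}=\Hom_{\overline{\FF}_l}(\rho,\overline{\FF}_l)$; by the same reasoning this is one-dimensional when $\rho^{\vee}\otimes\omega$ is trivial, i.e.\ when $\rho\cong\omega$, and zero otherwise. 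The local Euler characteristic formula then gives $\dim H^0(G_F,\rho)-\dim H^1(G_F,\rho)+\dim H^2(G_F,\rho)=0$, the usual correction term being trivial because $\#\rho$ is prime to $p$; hence $\dim H^1(G_F,\rho)=\dim H^0(G_F,\rho)+\dim H^2(G_F,\rho)$.

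It remains to decide when the trivial character and $\omega$ coincide. Here I would note that $\omega$ factors through $\Gal(F(\mu_l)/F)$, and that since $l\neq p$ the extension $F(\mu_l)/F$ is unramified of degree equal to the multiplicative order of $q$ modulo $l$; thus $\omega$ is trivial precisely when $q\equiv 1\pmod l$. Combining the above: if $q\not\equiv 1\pmod l$ then $\mathbf 1\neq\omega$, so $H^2(G_F,\rho)$ is one-dimensional exactly when $\rho=\omega$, $H^0$ is one-dimensional exactly when $\rho$ is trivial, and adding gives $H^1$ one-dimensional exactly for $\rho$ trivial or $\rho=\omega$; if $q\equiv 1\pmod l$ then $\mathbf 1=\omega$, so for $\rho$ trivial both $H^0$ and $H^2$ are one-dimensional and $\dim H^1=2$, while for every other irreducible $\rho$ all three groups vanish. (In particular an irreducible $\rho$ of dimension $\geq 2$ always lands in the vanishing case, being neither trivial nor $\cong\omega$.) This reproduces the four assertions of the lemma.

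There is no serious obstacle in this argument; the only points requiring care are invoking the Euler characteristic formula in the normalization where the contribution of $\#\rho$ is trivial because $l\neq p$ (so that $\dim H^1=\dim H^0+\dim H^2$ exactly), and correctly recording that $\omega$ becomes trivial exactly when $q\equiv 1\pmod l$, which is what produces the dichotomy in the statement.
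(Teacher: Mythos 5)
Your argument is correct, and it gives complete detail where the paper simply says ``an easy application of inflation-restriction'' and leaves the computation to the reader. The route is genuinely different, though. The paper's intended proof proceeds group-theoretically: since $\rho$ is $l$-torsion and wild inertia $P_F$ is pro-$p$ with $p\neq l$, the higher cohomology of $P_F$ vanishes, so inflation-restriction reduces everything to the tame quotient $G_F/P_F$, which is (topologically) generated by Frobenius and tame inertia with the explicit relation $\sigma\tau\sigma^{-1}=\tau^q$; one then reads off $H^1$ and $H^2$ directly by cocycle computations, and the dichotomy $q\equiv 1$ vs.\ $q\not\equiv 1\pmod l$ enters through the action of $q$ on the $l$-part of tame inertia. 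Your proof instead invokes the two heavy theorems of local Galois cohomology --- Tate local duality and the Euler characteristic formula --- and only computes $H^0$ by hand. What your approach buys is brevity and uniformity: once you know $H^0$ for all irreducible modules, duality gives $H^2$ for free and the Euler characteristic pins down $H^1$, with essentially no case analysis. What it costs is that it treats as black boxes results whose proofs are themselves more substantial than the direct inflation-restriction computation; the paper's approach is more self-contained and arguably more in the spirit of ``easy.'' The one step you should make explicit if writing this up is the reduction to a finite coefficient ring: Tate's theorems are stated for finite $G_F$-modules, and although you flag the reduction to a finite subfield $k\subset\overline{\FF}_l$ at the outset, you should note that this requires $\rho$ to be absolutely irreducible (or that you pass to a splitting field for $\rho$) so that the $H^0$ computations over $k$ and over $\overline{\FF}_l$ agree and the irreducibility hypothesis is preserved under base change.
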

\begin{proof} This is an easy application of inflation-restriction.
\end{proof}

\begin{prop} \label{prop:galois}
Let $\rho$ be an irreducible two-dimensional representation of $G_F$
over $\overline{\FF}_l$.  Then either:
\begin{itemize}
\item there exists a character $\xi$
of $G_E$, where $E$ is the unique unramified quadratic extension of $F$,
such that $\rho = \Ind_{G_E}^{G_F} \xi$, or
\item the map $\rho_A \mapsto \det \rho_A$ gives a bijection between $A$-deformations $\trho$
of $\rho$ and deformations of $\det \rho$.
\end{itemize}
\end{prop}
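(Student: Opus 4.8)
The plan is to reduce the deformation-theoretic claim to the cohomological computations of Lemma~\ref{lemma:h1}, treating separately the two geometric possibilities for an irreducible two-dimensional $\rho$ over $\overline{\FF}_l$. First I would recall that such a $\rho$ is either induced from a character of an index-two subgroup or else is \emph{primitive} in the sense that $\End_{G_F}(\ad^0 \rho)$ is as small as possible; the only subgroups of index two in $G_F$ are the three quadratic extensions $E/F$, and among these the induced case where $E/F$ is \emph{unramified} is singled out in the first bullet. So it suffices to show: if $\rho$ is not of the form $\Ind_{G_E}^{G_F}\xi$ with $E/F$ unramified, then $\rho_A \mapsto \det\rho_A$ is a bijection on $A$-deformations. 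The standard way to prove that $\det$ induces a bijection on deformations is to show that the fibers are trivial, i.e.\ that the deformation functor of $\rho$ with \emph{fixed determinant} is represented by $W(\overline{\FF}_l)$ itself — equivalently that $H^1(G_F, \ad^0\rho) = 0$ and $H^2(G_F,\ad^0\rho) = 0$, where $\ad^0\rho$ is the trace-zero adjoint. Given this vanishing, obstruction theory shows every fixed-determinant deformation over any finite-length $A$ is trivial, and then twisting by deformations of $\det\rho$ (which are unobstructed one-dimensional — or use that square roots are unique since $l$ is odd, as in the ramified-pair proposition above) gives the bijection with deformations of $\det\rho$.

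The core of the argument is therefore the cohomological vanishing $H^i(G_F,\ad^0\rho) = 0$ for $i = 1, 2$. Here I would use Lemma~\ref{lemma:h1} applied to the constituents of $\ad^0\rho$. By local Tate duality $H^2(G_F,\ad^0\rho) \cong H^0(G_F,(\ad^0\rho)^\vee(1))^\vee$, and since $\ad^0\rho$ is self-dual this is $H^0(G_F,\ad^0\rho \otimes \omega)^\vee$; the Euler characteristic formula then reduces everything to computing $H^0$'s, i.e.\ to asking whether $\ad^0\rho$ or $\ad^0\rho\otimes\omega$ contains the trivial representation. A nontrivial $G_F$-invariant in $\ad^0\rho$ would force $\rho$ to be a twist of an induced representation (the invariant gives an extra endomorphism, hence a splitting of $\rho|_{G_E}$ for some quadratic $E$); and if the only such $E$ that can occur is already excluded, $H^0(G_F,\ad^0\rho) = 0$. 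Similarly $H^0(G_F, \ad^0\rho\otimes\omega) \ne 0$ would give a nonzero $G_F$-map $\ad^0\rho \to \omega^{-1}$, i.e.\ $\omega$ is a constituent of $\ad^0\rho$; analyzing when this happens (again it forces an induced structure, with the relevant quadratic extension and the ramification of $\omega$ constrained by whether $q \equiv \pm 1 \bmod l$) pins down exactly the excluded case. Since $q$ is the residue cardinality and $l\nmid q$, the character $\omega$ is unramified of order dividing... — more precisely $\omega$ restricted to inertia is trivial iff $l \mid q-1$ is \emph{false}... one must be slightly careful here, and this bookkeeping — matching "$\omega$ appears in $\ad^0\rho$" against "$\rho$ induced from $G_E$, $E/F$ unramified" — is where I expect the real work to lie.

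The main obstacle, then, is not the formal deformation theory but the precise case analysis showing that the \emph{only} way to have $H^1(G_F,\ad^0\rho)\neq 0$ or $H^2(G_F,\ad^0\rho)\neq 0$ is for $\rho$ to be induced from a character of the unramified quadratic extension. Concretely: if $\rho = \Ind_{G_E}^{G_F}\xi$ with $E/F$ ramified (or if $\rho$ is primitive), one must check that neither the trivial character nor $\omega$ occurs in $\ad^0\rho$, using Lemma~\ref{lemma:h1} together with the explicit decomposition $\ad^0(\Ind_{G_E}^{G_F}\xi) = \eta_{E/F} \oplus \Ind_{G_E}^{G_F}(\xi/\xi^\sigma)$, where $\eta_{E/F}$ is the quadratic character cutting out $E$ and $\sigma$ generates $\Gal(E/F)$; one then checks case by case (ramified $E$ versus unramified $E$, and $q\equiv 1$ versus $q\equiv -1 \bmod l$) when a constituent can be trivial or equal $\omega$. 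Once that is done, Lemma~\ref{lemma:h1} gives the vanishing in all non-excluded cases, obstruction theory upgrades this to representability of the fixed-determinant functor by $W(\overline{\FF}_l)$, and the twisting argument (using $l$ odd to extract the unique square root, exactly as in the preceding proposition) completes the proof.
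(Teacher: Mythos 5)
Your overall strategy matches the paper's: reduce, via Lemma~\ref{lemma:h1}, to showing that $\Ad^0\rho$ has no Jordan--H\"older constituent equal to the trivial character or $\omega$, observe that such a constituent forces an induced structure on $\rho$, and identify the relevant quadratic extension. Two points need attention, however. First, the aside about $H^0(G_F,\Ad^0\rho)$ is a red herring: this group vanishes unconditionally, because $\rho$ is absolutely irreducible and hence $\End_{G_F}(\rho)$ consists of scalars, whose trace is $2\cdot(\text{scalar})$ and hence nonzero (as $l$ is odd) unless the scalar is zero. So there is never a nonzero $G_F$-invariant traceless endomorphism; the picture that such an invariant ``gives an extra endomorphism, hence a splitting of $\rho|_{G_E}$'' is not quite right, and no case analysis is needed to see $H^0=0$. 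What this does correctly buy you, together with the Euler characteristic formula and Tate duality, is $\dim H^1(\Ad^0\rho) = \dim H^2(\Ad^0\rho) = \dim H^0(\Ad^0\rho\otimes\omega)$, so — as you say — everything comes down to whether $\omega$ occurs as a subrepresentation of $\Ad^0\rho$.

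Second, and more substantively, the case analysis you defer as ``bookkeeping'' is the actual content of the proposition, and without it the proof is not complete. The paper argues in the contrapositive direction: suppose $H^1(G_F,\Ad^0\rho)\neq 0$. Then $\Ad^0\rho$, being three-dimensional and self-dual, contains a character $\nu$ as a subrepresentation, nontrivial by Schur. From $\nu\subset\Ad\rho$ one gets $\rho\cong\rho\otimes\nu$, so $\nu^2=1$ by comparing determinants, and $\rho|_{G_{E'}}$ splits, where $E'$ is the quadratic extension cut out by $\nu$; thus $\rho=\Ind_{G_{E'}}^{G_F}\xi'$ and $\Ad^0\rho = \nu\oplus\Ind_{G_{E'}}^{G_F}((\xi')^{\sigma}/\xi')$, the decomposition you cite. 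The hinge is then: if the induced summand is irreducible, the lemma forces $\nu=\omega$; if it is reducible, $\Ad^0\rho$ is a sum of three nontrivial characters, one of which must be $\omega$, and again $\rho\cong\rho\otimes\omega$ with $\omega^2=1$. In either branch $\omega$ has exact order $2$, and since $\omega$ is unramified (as $l\neq p$) its kernel is $G_E$ with $E/F$ the unramified quadratic — exactly the excluded case. (Note in particular that when $q\equiv 1\pmod l$, $\omega$ is trivial and hence never a subrepresentation, so the second bullet holds for every irreducible $\rho$; the two bullets are not mutually exclusive.) This final identification of the quadratic extension as unramified is what your outline omits and is where the proof actually lives.
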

\begin{proof}
It suffices to show that if $H^1(G_F, \Ad^0 \rho)$ is nonzero, then
there exists an $\xi$ as above.  By the lemma, this
can only happen if $\Ad^0 \rho$ contains a character as a Jordan-Holder
constituent.  Since $l$ is odd, $\Ad^0 \rho$ is naturally a direct 
summand of $\Ad \rho$ and is therefore a 3-dimensional self-dual representation
of $G_F$.  In particular, if $\Ad^0 \rho$ contains a character as a Jordan-Holder 
consituent then it contains a character $\nu$ as a subrepresentation.  Then
$\nu$ is a subrepresentation of $\Ad \rho$; by Schur's lemma $\nu$
is a nontrivial character of $G_F$.  We therefore have an isomorphism
$\rho \cong \rho \otimes \nu$.  Considering determinants we find that
$\nu^2$ is trivial.  Let $E^{\prime}$ be the quadratic extension of $F$ corresponding to
the kernel of $\nu$.  Then the restriction of $\Ad \rho$ to $G_{E^{\prime}}$
contains two copies of the trivial character, so $\rho$ becomes
reducible when restricted to $G_{E^{\prime}}$.  If we let $\xi^{\prime}$ be a character 
of $G_{E^{\prime}}$ contained in the restriction of $\rho$, then
$\rho \cong \Ind_{G_{E^{\prime}}}^{G_F} \xi^{\prime}$.

We then have a direct sum decomposition: 
$$\Ad^0 \rho = \nu \oplus \Ind_{G_{E^{\prime}}}^{G_F} 
\frac{(\xi^{\prime})^{\sigma}}{\xi^{\prime}}$$
where $\sigma$ generates $\Gal(E^{\prime}/F)$.  If the second direct
summand is irreducible, then $\nu$ must be equal to $\omega$
in order for $\Ad^0 \rho$ to have nontrivial cohomology.  In particular
$E^{\prime}$ is unramified over $F$, so $E^{\prime} = E$.  (Note that then
$q$ must be congruent to $-1$ mod $l$.)

On the other hand, if the second direct summand is reducible, then
$\Ad^0 \rho$ is the direct sum of three characters, which must be
nontrivial by Schur's lemma.  Again, one of these characters must be
$\omega$ in order for $\Ad^0 \rho$ to have nontrivial
cohomology.  It follows that $\rho \cong \rho \otimes \omega$,
and thus (as with $\nu$), $\omega$ has exact order $2$.  The kernel of
$\omega$ is thus $G_E$; it follows that $\rho$ is induced
from a character of $G_E$ as required.
\end{proof}

\begin{prop} \label{prop:induced}
Let $\xi$ be a character of $G_E$, and let 
$\rho = \Ind_{G_E}^{G_F} \xi$.  Suppose $\rho$ is absolutely irreducible.  Then
$$\xi_A \mapsto \Ind_{G_E}^{G_F} \xi_A$$
induces a bijection between $A$-deformations of $\xi$
and $A$-deformations of $\rho$.  (This gives a
natural isomorphism $R_{\xi}^{\univ} \cong R_{\rho}^{\univ}$.)
\end{prop}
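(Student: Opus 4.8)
The plan is to exploit the self-duality of $\Ad^0\rho$ established in the proof of Proposition~\ref{prop:galois} and reduce the claim to a Galois-cohomology computation via Shapiro's lemma. Since $\rho = \Ind_{G_E}^{G_F}\xi$ with $\rho$ absolutely irreducible, the first step is to compute $\Ad\rho$ as a $G_F$-representation. Letting $\sigma$ generate $\Gal(E/F)$, one has the standard decomposition
$$\Ad\rho \cong \Ind_{G_E}^{G_F}\left(\tfrac{\xi^{\sigma}}{\xi}\right) \oplus \Ind_{G_E}^{G_F}(\mathbf{1}),$$
and since $\Ind_{G_E}^{G_F}(\mathbf{1}) = \mathbf{1}\oplus\eta$ where $\eta$ is the quadratic character cutting out $E$, this gives $\Ad^0\rho \cong \eta \oplus \Ind_{G_E}^{G_F}(\xi^{\sigma}/\xi)$ after removing the trivial summand corresponding to scalars. (Here I am using that $l$ is odd, so the trace-zero subspace splits off; this is exactly the reasoning already invoked in Proposition~\ref{prop:galois}.)

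Next I would relate deformations of $\rho$ and of $\xi$ cohomologically. The tangent space to $R_\rho^{\univ}$ is $H^1(G_F,\Ad\rho)$ and that to $R_\xi^{\univ}$ is $H^1(G_E,\overline{\FF}_l)$ (the adjoint action on a character being trivial). By Shapiro's lemma, $H^1(G_F,\Ind_{G_E}^{G_F}(\xi^{\sigma}/\xi)) = H^1(G_E,\xi^{\sigma}/\xi)$ and $H^1(G_F,\Ind_{G_E}^{G_F}\mathbf{1}) = H^1(G_E,\overline{\FF}_l)$. The map $\xi_A\mapsto\Ind_{G_E}^{G_F}\xi_A$ on deformation functors induces, on tangent spaces, precisely the inclusion $H^1(G_E,\overline{\FF}_l)\hookrightarrow H^1(G_F,\Ad\rho)$ coming from this Shapiro-lemma summand. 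So to prove the map of deformation functors is an isomorphism it suffices to show this inclusion is a bijection, i.e.\ that the remaining summand $H^1(G_F,\eta)\oplus H^1(G_E,\xi^{\sigma}/\xi)$ vanishes — and then to check formal smoothness / obstruction-vanishing so that an isomorphism on tangent spaces upgrades to an isomorphism of rings (alternatively one constructs an explicit inverse: given a deformation $\rho_A$ of $\rho$, its restriction to $G_E$ decomposes, and one recovers $\xi_A$ as the appropriate summand, using irreducibility of $\rho$ to see the two eigencharacters are not conjugate and hence the decomposition is canonical).

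For the vanishing, note $\xi^{\sigma}/\xi$ is nontrivial (else $\rho$ would be reducible), and $\Ind_{G_E}^{G_F}(\xi^{\sigma}/\xi)$ is irreducible exactly when $(\xi^{\sigma}/\xi)^{\sigma}\neq \xi^{\sigma}/\xi$; one analyses the two cases as in Proposition~\ref{prop:galois}. In the irreducible case, Lemma~\ref{lemma:h1} says $H^1(G_F,-)$ of a nontrivial irreducible can only be nonzero if that irreducible is $\omega$, and $H^1(G_F,\eta)$ can be nonzero only if $\eta$ is trivial or $\eta=\omega$ — but $\eta$ is the (nontrivial) quadratic character of $E/F$, and whether $\eta=\omega$ or $\Ind_{G_E}^{G_F}(\xi^{\sigma}/\xi)=\omega$ are handled by observing that such a coincidence forces $\rho\cong\rho\otimes\omega$, contradicting absolute irreducibility together with the shape of $\rho$ unless one lands back in a case already excluded; in the reducible case one argues directly that three nontrivial characters cannot all vanish in $H^1$ without one being $\omega$, and rules that out similarly. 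The hypothesis that $\rho$ is absolutely irreducible is what ultimately kills all the exceptional terms. \textbf{The main obstacle} I anticipate is precisely this case analysis of when the ``error'' characters can equal $\omega$ or be trivial: it mirrors the delicate bookkeeping in Proposition~\ref{prop:galois}, and one must be careful that the constraint $\det\rho_A$ is allowed to vary freely (so one is genuinely computing $H^1(G_F,\Ad\rho)$, not $H^1(G_F,\Ad^0\rho)$) — the $H^1(G_E,\overline{\FF}_l)$ that survives is two-dimensional and matches the tangent space of $R_\xi^{\univ}$, so nothing is lost, but the accounting needs care. Once tangent spaces match, the cleanest finish is the explicit inverse construction rather than an obstruction-theory argument, since the decomposition of $\rho_A|_{G_E}$ is manifestly functorial in $A$.
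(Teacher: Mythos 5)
Your Mackey/Shapiro decomposition $\Ad\rho \cong \mathbf{1}\oplus\eta\oplus\Ind_{G_E}^{G_F}(\xi^\sigma/\xi)$ is correct, but the subsequent bookkeeping has a real error. Under Shapiro's lemma one has $H^1(G_E,\mathbf{1}) \cong H^1\bigl(G_F,\Ind_{G_E}^{G_F}\mathbf{1}\bigr) = H^1(G_F,\mathbf{1})\oplus H^1(G_F,\eta)$, and it is this \emph{two}-summand piece, not just $H^1(G_F,\mathbf{1})$, that is the image of the tangent map $\xi_A \mapsto \Ind_{G_E}^{G_F}\xi_A$. The complement you need to kill is therefore only $H^1(G_E,\xi^\sigma/\xi) = H^1\bigl(G_F,\Ind_{G_E}^{G_F}(\xi^\sigma/\xi)\bigr)$; you should \emph{not} be trying to prove $H^1(G_F,\eta)=0$. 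Indeed $H^1(G_F,\eta)$ is nonzero precisely when $\eta=\omega$, which happens exactly when $E/F$ is unramified and $q\equiv-1\pmod{l}$ --- the central case for this paper. Your attempt to rule that case out by ``deriving a contradiction from $\rho\cong\rho\otimes\omega$'' therefore cannot succeed: $\rho\cong\rho\otimes\omega$ is not a contradiction, it is exactly the situation identified in Proposition~\ref{prop:galois} where $\Ad^0\rho$ has nontrivial cohomology. As written, your case analysis would reject precisely the cases the proposition is most needed for.

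Once $H^1(G_F,\eta)$ is placed on the correct side of the ledger, the only vanishing to check is $H^1(G_E,\xi^\sigma/\xi)=0$. For $E/F$ unramified this does hold: $\xi^\sigma/\xi\neq\mathbf{1}$ by irreducibility; and if $\xi^\sigma/\xi = \omega|_{G_E}$ then, since $\xi^\sigma/\xi$ is inverted by $\sigma$ while $\omega|_{G_E}$ is $\sigma$-invariant, $\omega|_{G_E}$ has order two, forcing $\omega$ to have order four on $G_F$; on the other hand $\omega$ would then be a constituent of $\Ad\rho$, giving $\rho\cong\rho\otimes\omega$ and hence $\omega^2=\mathbf{1}$ by taking determinants --- a contradiction. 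By contrast, the paper's own proof avoids the Mackey decomposition entirely: it observes the tangent map is injective and simply matches dimensions, using that $H^1(G_E,\mathbf{1})$ is two-dimensional iff $\omega|_{G_E}=\mathbf{1}$ iff (by the analysis in Proposition~\ref{prop:galois}) $\Ad\rho$ contains $\omega$ iff $H^1(G_F,\Ad\rho)$ is two-dimensional, so injective forces bijective. Your instinct to finish by constructing the explicit inverse (restrict $\rho_A$ to $G_E$ and select the eigencharacter lifting $\xi$) is a good one and would make the isomorphism of deformation functors manifest, but the tangent-space accounting must be fixed first.
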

\begin{proof}
By induction on the length of $A$, it suffices to prove this for first-order
deformations.  The first order deformations of $\xi$ are a torsor
for $H^1(G_E,1)$, where $1$ denotes the trivial character of $G_E$.  The map
$$H^1(G_E, 1) \rightarrow H^1(G_F,\Ad \rho)$$
that induces a first order deformation of $\xi$ from $G_E$
to $G_F$ is clearly injective.  The dimension of $H^1(G_F,\Ad \rho)$
is two if $\Ad \rho$ contains $\omega$ and one otherwise.
As in the proof of the previous proposition, if $\Ad \rho$ contains
$\omega$ then $\omega$ is trivial on $G_E$.

On the other hand, by the lemma $H^1(G_E,1)$ is two-dimensional if $\omega$
is trivial on $G_E$, and one-dimensional otherwise, so the result follows.
\end{proof}

The deformations of a character of the Galois group of a local field
are easy to describe:
let $\xi$ be a character of $G_F$ with values in $\overline{\FF}_l^{\times}$,
and let $\txi: G_F \rightarrow W(\overline{\FF}_l)^{\times}$ be its Teichmuller lift.
Then $R_{\xi}^{\univ}$ is isomorphic to $W(\overline{\FF}_l)[[t]][\zeta]/\<\zeta^{l^n} - 1\>,$
where $n$ is equal to $\ord_l(q-1)$.  Define a deformation $\xi^{\univ}$ of $\xi$
by $\xi^{univ} = \txi\xi^{\prime}$, where $\xi^{\prime}$ is the character
that takes a Frobenius element $\Fr$ to $1 + t$ and a generator $\sigma$ of the $l$-part
of $I_F$ to $\zeta$.  Local class field theory easily shows that $\xi^{\univ}$ is 
the universal deformation of $\xi$.

\section{The deformation-theoretic correspondence} \label{sec:correspondence}
We are now in a position to relate the deformation theory of an admissible
representation $\pi$ over a finite field of characteristic $l$ to the
representation $\rho$ attached to $\pi$ by the local Langlands correspondence.
Throughout we use the Tate normalization for the local Langlands correspondence;
this has the advantage that $\pi$ and $\rho$ have the same field of definition.
This normalization differs from the more usual normalization by a twist by
an unramified character; it has the property that the central character of $\pi$
corresponds to $\tomega \det \rho$ via local class field theory.

Since we are primarily interested in deformation theory, we will work
with continuous representations of $G_F$, rather than the Weil-Deligne
representations usually considered in the Langlands correspondence, as the latter
do not behave well from a deformation-theoretic perspective.  Over
$\overline{\FF}_l$ this makes no difference, as any Weil-Deligne representation
arises from a unique representation of $G_F$ in this setting.  In
characteristic zero this is not true, but we will (somewhat abusively)
say that an admissible representation $\tpi$ and a continuous Galois
representation $\trho$ ``correspond under local Langlands'' if the Weil-Deligne
representation arising from $\trho$ corresponds to $\tpi$ under the more usual notion of the
local Langlands correspondence.  (This, of course, does {\em not} give
a bijection between irreducible Galois representations and cuspidal admissible
representations, as not every Weil-Deligne representation comes from a representation
of $G_F$.  On the other hand, {\em integral} irreducible representations of $W_F$ over
a local field of residue characteristic $l$- those that contain a lattice-
do arise from representations of $G_F$.)

Bushnell-Henniart give the following description of the characteristic
zero local Langlands correspondence for representations arising from admissible
pairs: (for details, see~\cite{BH}, section 34.)
If an admissible representation $\tpi$ over $\overline{\QQ_l}$ arises from an admissible 
pair $(E,\tchi)$ then the corresponding Galois representation $\trho$ is equal to 
$\Ind_{G_E}^{G_F} \txi$ for some character $\txi$ of $G_E$.  Moreover every such $\trho$ 
arises in this way.  If $E/F$ is unramified then there is a character $\tDelta$ of 
$E^{\times}$, independent of $\tchi$, such that $\txi$ corresponds to $\tchi\tDelta$ 
via class field theory.  (If $E/F$ is ramified, there is still an explicit
description of $\txi$ in terms of $\tchi$, but it is more complicated and depends on 
$\tchi$; we will not make use of this.)

The mod $l$ local Langlands correspondence is compatible with reduction mod $l$ for cuspidal
representations, and so is the process that associates a representation of $\GL_2(F)$ to an
admissible pair.  Thus if $\pi$ is a mod $l$ admissible representation attached to an
admissible pair $(E,\chi)$, we can choose a lift $(E,\tchi)$ to characteristic zero; this
gives a characteristic zero representation $\tpi$ lifting $\pi$.  The Galois representation
$\trho$ corresponding to $\tpi$ is induced from a character $\txi$ of $G_E$ with values in 
$\overline{\QQ}_l^{\times}$.  Then the representation $\rho$ attached to $\pi$ is just the
reduction mod $l$ of $\trho$, so $\rho$ is induced from a character $\xi$ of $G_E$ with
values in $\overline{\FF}_l^{\times}$.  In this manner we see that the statements of
the previous paragraph hold mutatis mutandis for the mod $l$ local Langlands correspondence.

This allows us to prove:
\begin{thm} \label{thm:supercuspidal}
Let $\pi$ be a supercuspidal representation of $\GL_2(F)$ over a finite field of
characteristic $l$ and let $k$ be its field of definition.  Let $\rho$ be the
Galois representation corresponding to $\pi$ under mod $l$ local Langlands.
Then there is a unique isomorphism of $W(k)$-algebras:
$$R_{\pi}^{\univ} \rightarrow R_{\rho}^{\univ}$$
with the property that the induced map
$R_{\pi}^{\univ}(\overline{\QQ}_l) \rightarrow R_{\rho}^{\univ}(\overline{\QQ}_l)$
is the characteristic zero local Langlands correspondence.
\end{thm}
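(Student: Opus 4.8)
The plan is to reduce the theorem to the results already established on both sides of the correspondence, case by case according to the classification of supercuspidal $\pi$ in Corollary~\ref{cor:modl}. The starting observation is that, since $\pi$ is supercuspidal, it either comes from an admissible pair $(E,\chi)$ (with $E/F$ tamely ramified, in which case Theorem~\ref{thm:admissible} identifies $R_\pi^{\univ}$ with $R_\chi^{\univ}$) or it is one of the exceptional ``primitive'' representations occurring only when $q$ is a power of $2$, where by the remark following Theorem~\ref{thm:admissible} deforming $\pi$ is equivalent to deforming the restriction $\phi$ of $\chi$ to $F^\times$, hence to deforming a character of $G_F^{\mathrm{ab}}$ via class field theory. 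On the Galois side, Proposition~\ref{prop:galois} together with Proposition~\ref{prop:induced} shows that $\rho$ is either induced from a character $\xi$ of $G_E$ for $E$ unramified quadratic — in which case $R_\rho^{\univ}\cong R_\xi^{\univ}$ — or else $R_\rho^{\univ}$ is identified with the deformation ring of $\det\rho$. So in every case both $R_\pi^{\univ}$ and $R_\rho^{\univ}$ are identified with the universal deformation ring of a character of a local field (of $E^\times$, of $F^\times$, or of $G_E$, $G_F$ as appropriate), and these character deformation rings have the completely explicit shape $W(k)[[t]][\zeta]/\langle\zeta^{l^n}-1\rangle$ recorded at the end of Section~\ref{sec:galois}.

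The construction of the isomorphism then proceeds as follows. First I would fix, once and for all, a lift $(E,\tchi)$ of the admissible pair (or of $\phi$) to characteristic zero; by the compatibility of the mod $l$ and characteristic zero parametrizations with reduction mod $l$ (the remark after the three-case construction in Section~\ref{sec:cuspidal}, and the discussion of Bushnell--Henniart's description of local Langlands recalled just before the theorem), the representation $\tpi$ attached to $(E,\tchi)$ reduces to $\pi$ and the Galois representation $\Ind_{G_E}^{G_F}\txi$ attached to $\tpi$ reduces to $\rho$. When $E/F$ is unramified there is the character $\tDelta$ of $E^\times$, independent of $\tchi$, with $\txi \leftrightarrow \tchi\tDelta$ under class field theory; its reduction $\Delta$ gives an isomorphism between the deformation functor of $\chi$ (a character of $E^\times$) and that of $\xi$ (a character of $G_E$) — namely $\chi_A \mapsto$ the Galois character corresponding to $\chi_A\Delta$ — and composing with the identifications $R_\pi^{\univ}\cong R_\chi^{\univ}$ and $R_\rho^{\univ}\cong R_\xi^{\univ}$ yields the desired ring isomorphism. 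When $E/F$ is ramified, Proposition following Theorem~\ref{thm:admissible} says deformations of $\chi$ are the same as deformations of $\phi=\chi|_{F^\times}$, and on the Galois side Proposition~\ref{prop:galois} (second bullet) says deformations of $\rho$ are the same as deformations of $\det\rho$, which under the Tate normalization equals $\tomega\cdot$ (the central character of $\pi$) $=\tomega\cdot(\phi^2 \cdot(\text{something explicit}))$; matching these via class field theory again gives the isomorphism. The primitive case is handled identically to the ramified case since there too deforming $\pi$ is deforming its central character.

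Two things need to be checked to finish: that the resulting isomorphism induces the characteristic zero local Langlands correspondence on $\overline{\QQ}_l$-points, and that it is the \emph{unique} such isomorphism. For the first, by construction each $\overline{\QQ}_l$-point of $R_\pi^{\univ}$ is a deformation of $\pi$, which by the explicit bijections corresponds to a character of $E^\times$ (resp.\ $F^\times$), hence — applying $\tDelta$ and class field theory, which is exactly Bushnell--Henniart's recipe — to the character of $G_E$ defining the Galois representation attached to that deformation by local Langlands; so the map on points is local Langlands by the very way the correspondence was recalled before the theorem statement. For uniqueness I would argue that $R_\pi^{\univ}$ is reduced and $l$-torsion free in all these cases (it is a subring of a product of finite extensions of $K$, as $R_\pi^{\univ}\cong R_\chi^{\univ}$ or $R_\xi^{\univ}$ is visibly of this form), so a $W(k)$-algebra homomorphism out of it is determined by its effect on $\overline{\QQ}_l$-points; hence any isomorphism inducing characteristic zero local Langlands on points coincides with ours.

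The main obstacle I anticipate is purely bookkeeping: keeping the normalizations consistent. The Tate normalization is used so that $\pi$ and $\rho$ share a field of definition, and so that the central character of $\pi$ matches $\tomega\det\rho$; the twist $\tDelta$ of Bushnell--Henniart is computed in the classical normalization, so one must carefully track how it interacts with the Tate twist and with the twist $\phi$ separating $\chi$ from the minimal $\chi'$, to be sure the diagram of deformation functors genuinely commutes and that the isomorphism is $W(k)$-linear rather than merely semilinear. Everything else — the existence of the isomorphism and its characterization on points — is a formal consequence of Theorems~\ref{thm:admissible} and the Propositions in Sections~\ref{sec:galois} once the normalizations are pinned down.
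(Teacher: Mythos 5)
Your strategy matches the paper's at every structural step: use Theorem~\ref{thm:admissible} (and the ramified-pair proposition and primitive-case remark) to identify $R_\pi^{\univ}$ with a character deformation ring, use Propositions~\ref{prop:galois} and \ref{prop:induced} to do the same for $R_\rho^{\univ}$, glue via the Bushnell--Henniart twist $\tDelta$ and class field theory in the unramified case and via central-character/determinant matching in the ramified and primitive cases, and check the resulting map induces local Langlands on $\overline{\QQ}_l$-points. Your uniqueness argument (that the paper disposes of with ``clearly unique'') is also sound in spirit, though the parenthetical justification is off: $R_\chi^{\univ} \cong W(\overline{\FF}_l)[[t]][\zeta]/\langle \zeta^{l^n}-1\rangle$ is not a subring of a product of finite extensions of $K$ (there is a transcendental variable $t$); the correct reason the map is pinned down by $\overline{\QQ}_l$-points is that this ring is reduced, $l$-torsion free, and the intersection of the kernels of all $\overline{\QQ}_l$-points is zero.

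The genuine gap is the descent from $W(\overline{\FF}_l)$ to $W(k)$. Everything you invoke from Section~\ref{sec:cuspidal} is stated for $W(\overline{\FF}_l)$-algebras, so what you have constructed is an isomorphism of $W(\overline{\FF}_l)$-algebras, whereas the theorem asserts an isomorphism of $W(k)$-algebras. You flag this (``$W(k)$-linear rather than merely semilinear'') but misclassify it as normalization bookkeeping: it is a Galois descent step, not a diagram chase. One must show the constructed isomorphism is equivariant for the natural $\Aut(W(\overline{\FF}_l)/W(k))$-actions on both deformation rings, which the paper does by first establishing uniqueness and then reducing the equivariance check to the standard fact that Tate-normalized local Langlands commutes with Galois conjugation of coefficients ($\tpi^\sigma \leftrightarrow \trho^\sigma$). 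Without this argument you have not produced the object the theorem claims. (A minor further slip: the central character of $\pi_\chi$ in the ramified case is $\chi|_{F^\times}=\phi$ itself, not $\phi^2$ times something; the squaring you have in mind comes from $N_{E/F}|_{F^\times}$, but it cancels against the $(\phi'\circ\det)|_Z$ factor.)
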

\begin{proof}
Such an isomorphism is clearly unique if it exists.
We begin by working with deformations over $W(\overline{\FF}_l)$ rather than
over $W(k)$ so we can apply the results of section~\ref{sec:cuspidal}.

First assume $\pi$ comes from an admissible pair $(E,\chi)$ with $E/F$ unramified.
Then by Theorem~\ref{thm:admissible} we have an isomorphism of $W(\overline{\FF}_l)$-algebras
$R_{\pi}^{\univ} \cong R_{\chi}^{\univ}$.  On the other hand, $\rho$ is the representation
$\Ind_{G_E}^{G_F} \xi$, where $\xi$ corresponds to $\chi\Delta$ via local class field
theory, and $\Delta$ is the mod $l$ reduction of $\tDelta$.  By Proposition~\ref{prop:induced},
$R_{\rho}^{\univ}$ is isomorphic over $W(\overline{\FF}_l)$ to $R_{\xi}^{\univ}$.
Twisting by $\tDelta$ and applying the local class field theory isomorphism gives
an isomorphism of $R_{\chi}^{\univ}$ with $R_{\xi}^{\univ}$; composing these isomorphisms
gives the desired isomorphism of $R_{\pi}^{\univ}$ with $R_{\rho}^{\univ}$.  That
the induced map on $\overline{\QQ}_l$-points is the characteristic zero local Langlands
correspondence is clear from the construction.

Otherwise, $\pi$ either comes from an admissible pair $(E,\chi)$ with $E/F$ ramified
or does not come from an admissible pair at all.  In either case deformations of
$\pi$ correspond to deformations of the central character of $\pi$.  By 
Proposition~\ref{prop:galois} deformations of the representation $\rho$ correspond
to deformations of its determinant.  In particular for any $A$-deformation $\pi_A$
of $\pi$, there is a unique $A$-deformation $\rho_A$ of $\rho$ such that
the central character of $\pi_A$ corresponds to $\tomega \det \rho_A$ under
local class field theory.  This gives the desired isomorphism of $R_{\pi}^{\univ}$
with $R_{\rho}^{\univ}$.  Let $\tpi$ be a $\overline{\QQ}_l$-point of
$R_{\pi}^{univ}$; i.e. a lift of $\pi$ to $\overline{\QQ}_l$.  Then the above
isomorphism takes $\tpi$ to the unique lift $\trho$ of $\rho$ such that
$\tomega^{-1}\det \trho$ corresponds to the central character of $\tpi$.  Since
the representation attached to $\tpi$ by (Tate normalized) local Langlands is
also a lift of $\trho$ with this determinant, the two coincide.

It remains to show the above isomorphism descends to an isomorphism
over $W(k)$.  Let $\sigma$ be an automorphism of $W(\overline{\FF}_l)$ over
$W(k)$.  Then the twist $(\pi_A)^{\sigma}$ of an $A$-deformation $\pi_A$ of $\pi$ is
a deformation of $\pi^{\sigma}$ over the $W(\overline{\FF}_l)$-algebra $A^{\sigma}$
whose underlying ring is $A$ but whose $W(\overline{\FF}_l)$-module structure
is twisted by $\sigma$.  The descent data isomorphism $\pi \cong \pi^{\sigma}$ allows us
to identify $(\pi_A)^{\sigma}$ with an $A^{\sigma}$-deformation of $\pi$.  This gives an
action of $\sigma$ on $R_{\pi}^{\univ}$.  As $\rho$ is also defined over $k$ we get a
similar action on $R_{\rho}^{\univ}$.  It suffices to show that conjugating the
isomorphism $R_{\pi}^{\univ} \cong R_{\rho}^{\univ}$ by $\sigma$ induces the same isomorphism;
by uniqueness it is enough to check this on $\overline{\QQ}_l$-points.  But this
is simply the (standard) fact that if $\tpi$ and $\trho$ correspond under (Tate normalized) 
local Langlands, then so do $\tpi^{\sigma}$ and $\trho^{\sigma}$.
\end{proof}

\end{document}